\documentclass[11pt]{amsart}
\usepackage{amsmath,amsthm,amscd,amsfonts, amssymb}
\textwidth 150mm
\oddsidemargin 5mm
\textheight 230mm
\topmargin -10mm
\usepackage{graphicx}
\usepackage{tikz}
\usepackage{color}
\include{diagxy}
\usepgflibrary{shapes}
\newcommand{\sect}[1]{\section{#1}\setcounter{equation}{0}}

\font\mbn=msbm10 scaled \magstep1
\font\mbs=msbm7 scaled \magstep1
\font\mbss=msbm5 scaled \magstep1
\newfam\mbff
\textfont\mbff=\mbn
\scriptfont\mbff=\mbs
\scriptscriptfont\mbff=\mbss

\newcommand{\Z}        {\mathbb{Z}  }

\newtheorem{Th}{Theorem}[section]
\newtheorem{Lm}[Th]{Lemma}
\newtheorem{C}[Th]{Corollary}

\newtheorem{Proposition}[Th]{Proposition}
\newtheorem{R}[Th]{Remark}

\newtheorem*{P1}{Problem 1}
\newtheorem*{P2}{Problem 2}
\newtheorem*{P3}{Problem 3}
\newtheorem*{P3'}{Problem 3$'$}
\newtheorem*{P4}{Problem 4}
\newtheorem*{P5}{Problem 5}
\begin{document}

\title[Universal Curves in the Center Problem for Abel Differential Equations]{Universal Curves in the Center Problem for Abel Differential Equations}

\author{Alexander Brudnyi} 
\address{Department of Mathematics and Statistics\newline
\hspace*{1em} University of Calgary\newline
\hspace*{1em} Calgary, Alberta\newline
\hspace*{1em} T2N 1N4}
\email{albru@math.ucalgary.ca}
\keywords{Abel equation, center problem, universal curve, first return map, fundamental group}
\subjclass[2010]{Primary 34C07; Secondary 37C27}

\thanks{Research supported in part by NSERC}

\begin{abstract}
We study the center problem for the class $\mathcal E_\Gamma$ of Abel differential equations $\frac{dv}{dt}=a_1 v^2+a_2 v^3$, $a_1,a_2\in L^\infty ([0,T])$, such that images of Lipschitz  paths $\tilde A:=\bigl(\int_0^\cdot a_1(s)ds, \int_0^\cdot a_2(s)ds\bigr): [0,T]\rightarrow\mathbb R^2$ belong to a fixed compact rectifiable curve $\Gamma$. Such a curve is called universal if whenever an equation in $\mathcal E_\Gamma$ has center on $[0,T]$, this center must be universal, i.e. all iterated integrals in coefficients $a_1, a_2$ of this equation must vanish. We investigate some basic properties of universal curves. Our main results include an algebraic description of a universal curve in terms of a certain homomorphism of its fundamental group into the group of locally convergent invertible power series with product being the composition of series,
explicit examples of universal curves and approximation of Lipschitz triangulable curves by universal ones.
\end{abstract}

\date{} 

\maketitle


\sect{Introduction}
In the paper we study the center problem for the Abel differential equation
\begin{equation}\label{eq1}
\frac{dv}{dt}=a_1 v^2+a_2 v^3
\end{equation}
with coefficients $a_1,a_2$ in the space $L^\infty ([0,T])$ of bounded  Lebesgue measurable real functions on an interval $[0,T]\Subset\mathbb R$.
Recall that equation \eqref{eq1} has a {\em center} on $[0,T]$ if for all sufficiently small initial values $v_0$ the corresponding solution satisfies
$v(T)=v(0):=v_0$.  The center problem is {\em to describe explicitly the set of coefficients $a_1, a_2$ for which the corresponding equations \eqref{eq1} have centers on $[0,T]$}. The problem is closely related to the classical Poincar\'{e} Center-Focus problem for planar polynomial vector fields, see, e.g., \cite{BRY}, [I] and references therein. Recently there has been an intensive study of the center problem for Abel differential equations focused on some composition conjectures for equations with piecewise smooth coefficients, see \cite{ AL,  A1, A2, A3,  A4, B2, B6, B3, B4, B5, B7, BlRY, BFY1, BFY2, BRY, BY, C, CGM1, CGM2, CGM3, GGL, P1, P2, PRY}.

 It is known, see, e.g., \cite{B1}, that the set of centers of equation \eqref{eq1} consists of pairs $a=(a_1,a_2)$ satisfying an infinite system of equations $c_i=0$, $i\in\mathbb N$, where
\begin{equation}\label{eq2}
\begin{array}{lr}
\displaystyle
c_i:=\sum_{i_{1}+\cdots +i_{k}=i}c_{i_{1},\dots, i_{k}}(i)\cdot I_{i_{1},\dots, i_{k}}(a),\quad\text{all}\quad i_1,\dots, i_k\in\{1,2\},\\
\\
\displaystyle 
c_{i_{1},\dots, i_{k}}(i):=(i-i_{1}+1)(i-i_{1}-i_{2}+1)(i-i_1-i_2-i_3+1)\cdots 1,\\
\\
\displaystyle
I_{i_{1},\dots, i_{k}}(a):=\int\cdots\int_{0\leq s_{1}\leq\cdots\leq s_{k}\leq T}a_{i_{k}}(s_{k})\cdots a_{i_{1}}(s_{1})\ \!ds_{k}\cdots ds_{1}.\smallskip
\end{array}
\end{equation}
In general, qualitative analysis of this system is highly arduous  because of complexity of the involved equations and absence of apparent recursive relations between them.
The simplest and, in a certain statistical sense, the most frequently occurring centers, so-called {\em universal centers}, are defined by vanishing of all iterated integrals $I_{i_{1},\dots, i_{k}}(a)$. Such centers  admit an equivalent characterization in terms of the {\em tree composition condition} (see \cite[Cor.\,1.12,\, Th.\,1.14]{B2}):\smallskip

{\em Suppose that image $\Gamma_{\tilde A}$ of the map
\begin{equation}\label{tildea}
\tilde A:=(\tilde a_1, \tilde a_2): [0,T]\rightarrow\mathbb R^2,\quad\text{where}\quad \tilde a_i(t):=\int_0^t a_i(s)ds,\quad i=1,2,
\end{equation}
is Lipschitz triangulable. Then equation \eqref{eq1} has universal center on $[0,T]$ if and only if there are a finite metric tree $\mathcal T$ and continuous maps $\hat A: [0,T]\rightarrow \mathcal T$,
$\hat A(0)=\hat A(T)$, and $p:\mathcal T\rightarrow\Gamma_{\tilde A}$ such that $\tilde A=p\circ\hat A$.}\smallskip

Recall that $\Gamma\Subset\mathbb R^2$ is a Lipschitz triangulable curve if there exist a finite subset $S\subset\Gamma$  and Lipschitz and locally bi-Lipschitz arcs  $h_1,\dots, h_k: (0,1)\rightarrow \mathbb R^2$ such that
$\Gamma\setminus S=\sqcup_{1\le i\le k}\, h_i(0,1)$. 
The class of Lipschitz triangulable curves includes, in particular, compact curves in $\mathbb R^2$ admitting piecewise
 $C^1$ parametrization or images of nonconstant  analytic maps $[0,T]\rightarrow\mathbb R^2$, see, e.g., \cite[Ex.\,5.1]{BY}. \smallskip

It was proved in  \cite[Sec.\,3.1]{B5} that if $\tilde A$ is nonconstant analytic, then tree $\mathcal T$ in the above characterization of universal centers of equation \eqref{eq1} is a closed interval in  $\mathbb R$. In this case we say that $\tilde A$ satisfies a {\em composition condition}. In general,  tree composition and composition conditions are not equivalent, i.e. there exist Lipschitz maps $[0,T]\rightarrow\mathbb R^2$ which factor through continuous maps into non-interval trees but do not factor through continuous maps into intervals,  see \cite[Sec.\,3]{BY}.\smallskip

The  famous {\em composition conjecture} states that all centers of equations \eqref{eq1} with polynomial coefficients are universal (equivalently, for all centers of such equations the corresponding maps $\tilde A$ satisfy composition conditions). This conjecture is false already for Abel equations with coefficients being trigonometric polynomials, see [BFY1] for the distinction between these two cases.\smallskip

One associates with each connected Lipschitz triangulable curve $\Gamma\Subset\mathbb R^2$ containing the origin an (uncountable) family of equations \eqref{eq1} with 
$\Gamma_{\tilde A}\subset\Gamma$. Universal centers of this family admit a simple topological description. Specifically, equation \eqref{eq1} with $\Gamma_{\tilde A}\subset\Gamma$ has universal center  on $[0,T]$ if and only if {\em path $\tilde A: [0,T]\rightarrow\Gamma$ is closed and contractible in $\Gamma$}. This is equivalent to the above formulated tree composition condition for $\tilde A$, see \cite{B2} for details. At present, there is no way to explicitly describe nonuniversal centers of Abel differential equations. However, there exist Lipschitz triangulable curves $\Gamma\Subset\mathbb R^2$ satisfying the property that whenever an equation \eqref{eq1} with 
$\Gamma_{\tilde A}\subset\Gamma$ has a center on $[0,T]$, this center must be universal. We call such curves $\Gamma$ {\em universal}. The basic examples of  universal curves are (containing $0\in\mathbb R^2$)  connected {\em rectangular curves} composed of finitely many intervals each parallel to one of the coordinate axes, see \cite{B3}, or connected Lipschitz triangulable curves whose fundamental groups are either trivial or isomorphic to $\Z$, see \cite{B2}.\smallskip

In the present paper we study analytic and geometric properties of universal curves. In particular, we show that the set of ``sufficiently smooth'' universal curves  is dense with respect to the Hausdorff metric on compact subsets of $\mathbb R^2$ in the space of all connected Lipschitz triangulable curves in $\mathbb R^2$ containing the origin. \smallskip

The paper is organized as follows. In the next section we formulate the main results of the paper accompanied by some important open problems in the area. These results include an algebraic description of a universal curve in terms of a certain homomorphism of its fundamental group into the group of locally convergent invertible power series with product being the composition of series, explicit examples of universal curves and approximation of Lipschitz triangulable curves by universal ones. Section 3 is devoted to proofs of our results. 

\sect{Formulation of Main Results}
\subsection{A Characterization of Universal Curves}
We recall some results presented in \cite{B2}. Let us introduce operations $*$ and $^{-1}$ on the set $\mathcal A$ of pairs $a=(a_1, a_2)$  of coefficients of equations \eqref{eq1} by the formulas 
\[
\left((a_1,a_2)*(b_1,b_2)\right)(t):=\left\{
\begin{array}{ccc}
\left(2b_1(2t),2b_2(2t)\right)&{\rm if}&0\le t\le\frac T2\smallskip\\
\left(2a_1(2t-T),2a_2(2t-T)\right)&{\rm if}&\frac T2 \le t\le T;
\end{array}
\right.
\]
\[
\left((a_1,a_2)^{-1}\right)(t):=
\left(-a_1(T-t),-a_2(t-T)\right),\quad 0\le t\le T.
\]
It is well known that for $a,b\in\mathcal A$ iterated integrals $I_{i_{1},\dots, i_{k}}$, see \eqref{eq2}, satisfy 
\begin{equation}\label{e2.2}
I_{i_{1},\dots, i_{k}}(a*b)=
I_{i_{1},\dots, i_{k}}(a)
+\sum_{j=1}^{k-1}I_{i_{1},\dots,i_{j}}(a)\cdot I_{i_{j+1},\dots, i_{k}}(b)+I_{i_{1},\dots, i_{k}}(b).
\end{equation}
\begin{equation}\label{e2.3}
I_{i_{1},\dots, i_{k}}(a^{-1})=(-1)^{k}I_{i_{1},\dots, i_{k}}(a).
\end{equation}

For $a\in\mathcal A$ by $P(a)$ we denote the first return map of the corresponding equation \eqref{eq1} (associating with a sufficiently small initial value $r$ the value of the corresponding solution of \eqref{eq1} at $T$).
 It is represented as a convergent in a neighbourhood of $0$ power series in the initial value $r$ of the Abel equation,
\begin{equation}\label{e2.1}
P(a)(r)=r+\sum_{i=1}^\infty c_{i} r^{i+1},
\end{equation}
where $c_i$ is the weighted sum of iterated integrals in \eqref{eq2}.

By $G_c[[r]]$ we denote the group of convergent in neighbourhoods of $0$ power series of form \eqref{e2.1} with product $\circ$ defined by the composition of series. Then for $a,b\in\mathcal A$ we have
\begin{equation}\label{eq3.1}
P(a*b)=P(a)\circ P(b),\qquad P(a^{-1})=P(a)^{-1}.
\end{equation}

An important question is (cf. \cite[Question~2, page 481]{B6}):
\begin{P1}
Is it true that the first return map $P:\mathcal A\rightarrow G_c[[r]]$ is an epimorphism (i.e. each series in $G_c[[r]]$ is the first return map of an Abel equation \eqref{eq1})?
\end{P1}

Next, let $\Gamma\Subset\mathbb R^2$ be a connected  Lipschitz triangulable curve containing the origin. According to the definition, $\Gamma$ is a finite connected one-dimensional $CW$-complex and so it is homotopically equivalent to the wedge sum of finitely many circles. In particular, the fundamental group $\pi_1(\Gamma)$  of $\Gamma$ with base point $0\in\mathbb R^2$ is isomorphic, for some $m\in\mathbb Z_+$, to the free group with $m$ generators $F_m$.  (Here $F_0$ stands for the trivial group.)

By $\mathcal A_0(\Gamma)$ we denote the  subset of elements $a\in\mathcal A$ such that $\tilde A(T)=0$ and 
 $\tilde A([0,T)):=\Gamma_{\tilde A}\subset\Gamma$ (recall that  
 $\tilde A(t):=\int_0^{t}a(s)\,ds$,  $t\in [0,T]$, see \eqref{tildea}). Since $\Gamma$ admits a Lipschitz triangulation, the set $\mathcal A_0(\Gamma)$ is uncountable. (For instance, the derivative of a closed Lipschitz path passing through the origin and determined as the product, in the sense of algebraic topology, see, e.g., \cite{Hu}, of Lipschitz arcs forming the triangulation of $\Gamma$ belongs to $\mathcal A_0(\Gamma)$.)
 Clear $\mathcal A_0(\Gamma)$ is closed with respect to operations $*$ and $^{-1}$. Next, for $a\in\mathcal A_0(\Gamma)$  the closed Lipschitz path $\tilde A$ represents an element $[\tilde A]$ of $\pi_1(\Gamma)$, and the correspondence $\mathcal A_0(\Gamma)\ni a\mapsto [\tilde A]\in\pi_1(\Gamma)$ determines an epimorphism of monoids $\Psi_\Gamma: \mathcal A_0(\Gamma)\rightarrow\pi_1(\Gamma)$:
\begin{equation}\label{epi}
\Psi_\Gamma(a*b)=\Psi_\Gamma(a)\cdot\Psi_\Gamma(b),\quad \Psi_\Gamma(a^{-1})=\Psi_\Gamma(a)^{-1},\quad a,b\in \mathcal A_0(\Gamma).
\end{equation}
Since for each $\gamma\in\pi_1(\Gamma)$ the first return map $P$ is constant on $\Psi_\Gamma^{-1}(\gamma)$, see \cite{B2}, there exists a homomorphism $\hat P_\Gamma:\pi_1(\Gamma)\rightarrow G_c[[r]]$ such that
\begin{equation}\label{compos}
P=\hat P_\Gamma\circ\Psi_\Gamma\quad\text{on}\quad \mathcal A_0(\Gamma).
\end{equation}
\begin{Th}\label{te1}
Curve $\Gamma$ is universal if and only  $\hat P_\Gamma$ is a monomorphism.
\end{Th}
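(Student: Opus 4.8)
The plan is to reduce the universality property of $\Gamma$, which a priori quantifies over all $a$ with $\Gamma_{\tilde A}\subset\Gamma$, to a statement about the subset $\mathcal A_0(\Gamma)$, and then to read off both implications of the claimed equivalence from the factorization $P=\hat P_\Gamma\circ\Psi_\Gamma$ in \eqref{compos} together with the fact that $\Psi_\Gamma$ is a surjective monoid homomorphism.

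The first step I would carry out is to check that a center automatically closes the path $\tilde A$, so that any $a$ witnessing a nonuniversal center on $\Gamma$ in fact lies in $\mathcal A_0(\Gamma)$. If $P(a)=\mathrm{id}$ then in particular $c_1=c_2=0$ in \eqref{e2.1}, and from \eqref{eq2} one reads off $c_1=I_1(a)=\tilde a_1(T)$ and $c_2=I_2(a)+2I_{1,1}(a)=\tilde a_2(T)+\tilde a_1(T)^2$. The first identity gives $\tilde a_1(T)=0$, and then the second forces $\tilde a_2(T)=0$; hence $\tilde A(T)=0$ and $a\in\mathcal A_0(\Gamma)$. With this in hand, $\Gamma$ is universal if and only if every $a\in\mathcal A_0(\Gamma)$ with $P(a)=\mathrm{id}$ has a universal center, and by the characterization of universal centers on a Lipschitz triangulable curve recalled above (see \cite{B2}) the latter is equivalent to $\tilde A$ being contractible in $\Gamma$, i.e. to $\Psi_\Gamma(a)=e$.

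Granting this reduction, I would finish by a short chase in both directions. For sufficiency, assume $\hat P_\Gamma$ is a monomorphism and take $a\in\mathcal A_0(\Gamma)$ with $P(a)=\mathrm{id}$; then $\hat P_\Gamma(\Psi_\Gamma(a))=P(a)=\mathrm{id}=\hat P_\Gamma(e)$ by \eqref{compos}, so injectivity yields $\Psi_\Gamma(a)=e$, the center is universal, and $\Gamma$ is universal. For necessity, assume $\Gamma$ is universal and take $\gamma\in\pi_1(\Gamma)$ with $\hat P_\Gamma(\gamma)=\mathrm{id}$; since $\Psi_\Gamma$ is an epimorphism there is $a\in\mathcal A_0(\Gamma)$ with $\Psi_\Gamma(a)=\gamma$, whence $P(a)=\hat P_\Gamma(\gamma)=\mathrm{id}$ by \eqref{compos}, so $a$ has a center, which by universality must be universal, giving $\Psi_\Gamma(a)=\gamma=e$.

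The part I expect to require the most care is the reduction step rather than the concluding chase. The definition of universality ranges over all Abel equations whose primitive data $\tilde A$ land in $\Gamma$, whereas $\Psi_\Gamma$ and $\hat P_\Gamma$ are defined only on the closed-path monoid $\mathcal A_0(\Gamma)$, so one must make sure that no counterexample with $\tilde A(T)\neq 0$ is being overlooked. The low-order coefficient computation settles this, but it depends on extracting the correct values of $c_1,c_2$ from \eqref{eq2} and on the elementary identity $I_{1,1}(a)=\tfrac{1}{2}\tilde a_1(T)^2$; once those are verified the remainder of the argument is purely formal and uses only the factorization \eqref{compos} and the surjectivity of $\Psi_\Gamma$.
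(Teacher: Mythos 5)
Your proof is correct and follows essentially the same route as the paper: both directions are the formal chase through the factorization $P=\hat P_\Gamma\circ\Psi_\Gamma$ in \eqref{compos}, the surjectivity of $\Psi_\Gamma$, and the characterization from \cite{B2} of universal centers as closed paths contractible in $\Gamma$. Your additional reduction step --- computing $c_1=\tilde a_1(T)$ and $c_2=\tilde a_2(T)+\tilde a_1(T)^2$ to show that any center with $\Gamma_{\tilde A}\subset\Gamma$ forces $\tilde A(T)=0$, hence $a\in\mathcal A_0(\Gamma)$ --- is a correct and welcome justification of a point the paper's proof leaves implicit when it restricts attention to $\mathcal A_0(\Gamma)$.
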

In particular, this implies that the group $\hat P_\Gamma(\pi_1(\Gamma))\subset G_c[[r]]$ is isomorphic to $F_m$.

\begin{P2}
Describe all possible finitely generated subgroups of $G_c[[r]]$.
\end{P2}
For instance, one can show that a ``generic'' finitely generated subgroup of $G_c[[r]]$ is free and each finitely generated solvable subgroup of $G_c[[r]]$ is isomorphic to free abelian group $\mathbb Z^m$, where $m$ can be any natural number.
However, it is even not known whether the fundamental group of a compact Riemann surface of genus $\ge 2$ can be realized as a subgroup of $G_c[[r]]$, see, e.g., \cite{B3, B4} and references therein.

\subsection{Explicit Examples of Universal Curves}
\begin{Proposition}\label{prop3.1}
Lipschitz triangulable curves $\Gamma\subset\mathbb R^2$ containing the origin whose fundamental groups $\pi_1(\Gamma)$ are either isomorphic to $F_0$ (i.e. trivial) or isomorphic to $F_1\, (\cong\mathbb Z)$  (see Figure 1 (A), (B) below) are universal.
\end{Proposition}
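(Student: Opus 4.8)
The plan is to reduce everything to Theorem~\ref{te1}, which asserts that $\Gamma$ is universal if and only if $\hat P_\Gamma$ is a monomorphism. When $\pi_1(\Gamma)\cong F_0$ is trivial there is nothing to prove, since the only homomorphism out of the trivial group is injective. So I would assume $\pi_1(\Gamma)\cong F_1\cong\mathbb Z$ and let $g$ be a generator. A homomorphism $\hat P_\Gamma:\mathbb Z\rightarrow G_c[[r]]$ is completely determined by the single series $f:=\hat P_\Gamma(g)$, and it is injective precisely when $f$ has infinite order. Thus the first move is to reduce the whole statement to showing that $f\neq\mathrm{id}$.

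The second ingredient is that $G_c[[r]]$ is torsion-free. Indeed, if $f(r)=r+\sum_{i\ge 1}c_ir^{i+1}\neq\mathrm{id}$ and $k$ is minimal with $c_k\neq 0$, then an induction on $n$ using the form \eqref{e2.1} shows $f^{\circ n}(r)=r+n\,c_k\,r^{k+1}+O(r^{k+2})$, so $f^{\circ n}\neq\mathrm{id}$ for every $n\ge 1$; applying the same to $f^{-1}$ handles negative powers. Hence every nonidentity element of $G_c[[r]]$ generates an infinite cyclic subgroup, and $\hat P_\Gamma$ is a monomorphism as soon as $f\neq\mathrm{id}$.

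It therefore remains to produce a single $a\in\mathcal A_0(\Gamma)$ with $\Psi_\Gamma(a)=g$ and $P(a)\neq\mathrm{id}$; by \eqref{compos} this forces $f=\hat P_\Gamma(g)\neq\mathrm{id}$. I would do this by exhibiting a nonvanishing coefficient $c_3$ of the return map. For any $a\in\mathcal A_0(\Gamma)$ we have $\tilde A(T)=0$, so $\tilde a_1(T)=\tilde a_2(T)=0$; consequently $I_{1,1,1}(a)=\tfrac{1}{6}\tilde a_1(T)^3=0$ and $I_{1,2}(a)+I_{2,1}(a)=\int_0^T d(\tilde a_1\tilde a_2)=0$. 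Substituting the weights from \eqref{eq2}, namely $c_3=3\,I_{1,2}(a)+2\,I_{2,1}(a)+6\,I_{1,1,1}(a)$, then collapses the expression to $c_3=I_{1,2}(a)=\int_0^T\tilde a_1\,d\tilde a_2$, the signed area enclosed by the loop $\tilde A$. Since $\pi_1(\Gamma)\cong\mathbb Z$, the graph $\Gamma$ contains an embedded cycle $C$ representing a generator, and $C$ is a rectifiable Jordan curve in $\mathbb R^2$, hence bounds a region of positive area. Representing $g$ by a based loop of the form $\gamma\cdot C\cdot\gamma^{-1}$, where $\gamma$ is a Lipschitz path in $\Gamma$ from the origin to $C$, the out-and-back contributions of $\gamma$ cancel in $\int\tilde a_1\,d\tilde a_2$, leaving $c_3=\pm\,\mathrm{Area}(C)\neq 0$.

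The main obstacle I anticipate is this last step: pinning down a provably nonzero coefficient of the return map for the generating class. The torsion-freeness of $G_c[[r]]$ and the reduction through Theorem~\ref{te1} are essentially formal, but identifying $c_3$ with the enclosed signed area (via the two closed-loop relations $I_{2,1}(a)=-I_{1,2}(a)$ and $I_{1,1,1}(a)=0$) and then guaranteeing that this area is nonzero—by selecting an embedded Jordan-curve representative of the generator and verifying that attached dendrites contribute nothing to $\int\tilde a_1\,d\tilde a_2$—is where the geometric content sits. A secondary technical point worth checking is that Green's theorem applies to the rectifiable curve $C$, so that $\int_C\tilde a_1\,d\tilde a_2$ genuinely equals the nonzero area it encloses.
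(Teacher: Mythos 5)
Your proposal is correct and follows essentially the same route as the paper: reduction through Theorem \ref{te1}, the $i=3$ center coefficient collapsing (via $\tilde a_1(T)=\tilde a_2(T)=0$ and integration by parts) to the signed-area integral $\int_{\tilde A} x\,dy$, and Green's/Jordan's theorem to show this area is nonzero. The only substantive difference is that you prove torsion-freeness of $G_c[[r]]$ explicitly to pass from $\hat P_\Gamma(g)\neq\mathrm{id}$ to injectivity, a step the paper uses implicitly when it asserts that a non-universal $\Gamma$ with $\pi_1(\Gamma)\cong\mathbb Z$ forces $\hat P_\Gamma$ to send the whole group to $1$ — so your write-up is, if anything, slightly more complete on this point.
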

\begin{center}
\includegraphics[scale=1.2]{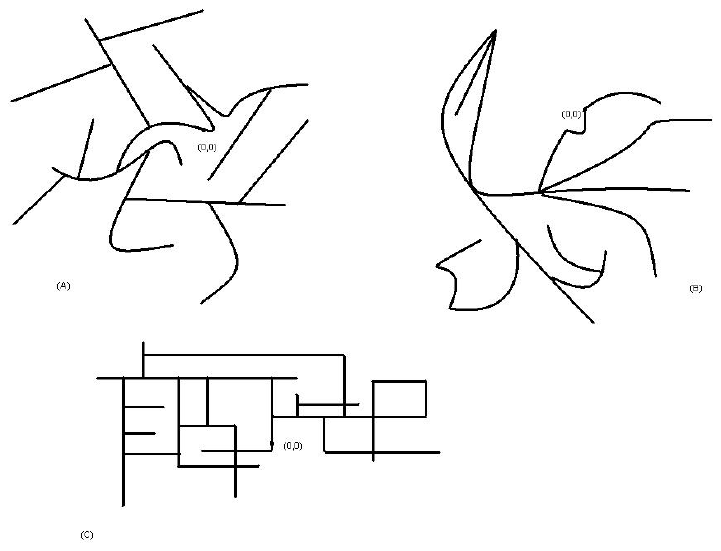} 
\end{center}
Figure 1. Examples of universal curves $\Gamma$: (A) $\pi_1(\Gamma)$ is trivial; (B) $\pi_1(\Gamma)\cong\mathbb Z$;\\  \hspace*{15mm} (C) a rectangular curve. \medskip

A curve $\Gamma\Subset\mathbb R^2$ is called {\em rectangular} if it the union of finitely many intervals each parallel to one of the coordinate axes (see Figure 1 (C)). In \cite[Th.~2.1]{B3} it was proved that {\em every connected rectangular curve $\Gamma$ containing the origin is universal}. The proof is heavily rely upon a deep result  of \cite{Co} on the structure of a certain subgroup of automorphisms of $\mathbb C$. We exploit the result of \cite{B3} in the proof of Theorem \ref{te3} but now we use its corollary \cite[Th.~2.4]{B4} to describe other explicit examples of universal curves. 

Suppose $P_1,P_2,Q_1,Q_2$ are real polynomials without constant terms such that coefficients of all of them together are algebraically independent over $\mathbb Q$. Consider polynomial paths
$A_i:=(P_i,Q_i):[0,1]\rightarrow\mathbb R^2$, $i=1,2$. Next, define the set of points
\[
S:=\{(m\cdot A_1(1),n\cdot A_2(1))\in\mathbb R^2\, :\, (m,n)\in\mathbb Z^2\}.
\]
We join each pair of points $v_1, v_2\in S$ such that $v_2-v_1=A_1(1)$ by curve $X_{v_1,v_2}:=\{v_1+A_1(t)\, :\, t\in [0,1]\}$ and each pair of points $w_1,w_2\in S$ such that $w_2-w_1=A_2(1)$ by curve $Y_{w_1,w_2}:=\{w_1+A_2(t)\, :\, t\in [0,1]\}$ thus obtaining a curvilinear lattice $\mathcal L$ in $\mathbb R^2$. Assume that polynomials $P_i,Q_i$ were chosen so that  
maps $A_i:[0,1]\rightarrow\mathbb R^2$ are embeddings and all possible curves $X_{v_1, v_2}$ and $Y_{w_1,w_2}$ (referred to as  {\em edges} of $\mathcal L$) are either disjoint or intersect by one of the points $v_1,v_2,w_1,w_2$ only (see Figure 2). Then the following result holds.
\begin{Proposition}\label{prop3.2}
Each connected curve $\Gamma\Subset\mathcal L$ containing the origin composed of finitely many edges of $\mathcal L$  is universal.
\end{Proposition}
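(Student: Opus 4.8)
The plan is to verify the criterion of Theorem \ref{te1}, i.e.\ to show that the homomorphism $\hat P_\Gamma:\pi_1(\Gamma)\to G_c[[r]]$ associated with the curvilinear lattice curve $\Gamma$ is a monomorphism. The first (and mostly bookkeeping) step is to record that the geometric hypotheses — that the $A_i$ are embeddings and that distinct edges of $\mathcal L$ are disjoint or meet only at common vertices — guarantee that $\Gamma$ is a finite connected one-dimensional $CW$-complex, hence Lipschitz triangulable and containing the origin, so that Theorem \ref{te1} and the whole apparatus of \eqref{epi}--\eqref{compos} apply. Now fix a loop $\ell$ in $\Gamma$ based at $0$. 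Since $\ell$ is a concatenation of edges of $\mathcal L$, and each edge is a \emph{translate} of $A_1$ or of $A_2$, the associated path $\tilde A$ is, up to reparametrization, a $*$-product of the fixed arcs $A_1,A_2$ and their reverses. Because the first return map depends only on the derivative $a$ of $\tilde A$ and is therefore translation invariant, every forward $A_1$-edge contributes the \emph{same} series $F_1:=P(A_1')\in G_c[[r]]$, every forward $A_2$-edge the same series $F_2:=P(A_2')$, and reversed edges contribute $F_1^{-1},F_2^{-1}$ by \eqref{eq3.1}. Consequently $\hat P_\Gamma=\iota\circ\theta$, where $\theta:\pi_1(\Gamma)\to F_2=\langle x,y\rangle$ is the purely combinatorial map replacing each $A_1$-edge by $x^{\pm1}$ and each $A_2$-edge by $y^{\pm1}$, and $\iota:F_2\to G_c[[r]]$ sends $x\mapsto F_1$, $y\mapsto F_2$.

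Thus $\hat P_\Gamma$ is a monomorphism as soon as (a) $\iota$ is injective, i.e.\ $F_1$ and $F_2$ generate a free subgroup of $G_c[[r]]$ of rank two, and (b) $\theta$ is injective. Part (b) is combinatorial and independent of the \emph{shape} of the edges: it is literally the same map $\theta$ for $\Gamma$ and for the straight-edge rectangular curve $\Gamma_{\mathrm{rect}}$ combinatorially isomorphic to $\Gamma$ (replace each $A_1$-edge by a unit horizontal segment and each $A_2$-edge by a unit vertical one). Since \cite[Th.~2.1]{B3} guarantees that $\Gamma_{\mathrm{rect}}$ is universal, Theorem \ref{te1} gives that $\hat P_{\Gamma_{\mathrm{rect}}}=\iota_{\mathrm{rect}}\circ\theta$ is injective, whence $\theta$ itself is injective. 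This combinatorial reduction is exactly the information packaged in the corollary \cite[Th.~2.4]{B4}, which I would invoke directly. It therefore remains to establish (a), which is the analytic heart of the matter.

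To prove (a) I would argue by genericity, exploiting the algebraic independence of the coefficients. For a reduced word $w\in F_2$ each coefficient of the power series $w(F_1,F_2)-r$ is a universal polynomial \emph{with rational (in fact integer) coefficients} in the Taylor coefficients of $F_1$ and $F_2$; by \eqref{eq2} and \eqref{e2.1} those Taylor coefficients are $\Q$-polynomials in the iterated integrals of $A_1',A_2'$ — the weights $c_{i_1,\dots,i_k}(i)$ being integers — and, evaluating the simplex integrals in \eqref{eq2}, each such iterated integral is itself a $\Q$-polynomial in the coefficients of $P_1,Q_1,P_2,Q_2$. Hence, for each nontrivial $w$, the relation $w(F_1,F_2)=r$ amounts to the simultaneous vanishing of a family of $\Q$-polynomials in the (by hypothesis algebraically independent) coefficients of $P_1,Q_1,P_2,Q_2$. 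To see these polynomials are not all identically zero, I would produce for each $w$ a single \emph{rectangular witness}: specializing $A_1,A_2$ to the unit horizontal and vertical segments $P_1=t,\,Q_1=0$ and $P_2=0,\,Q_2=t$ yields return maps that generate a free group (this is the content of \cite{B3}, resting ultimately on \cite{Co}), so at this specialization $w(F_1,F_2)\neq r$, and therefore some coefficient of $w(F_1,F_2)-r$ is a nonzero polynomial. Algebraic independence over $\Q$ then forces that coefficient to be nonzero at the chosen coefficients, giving $w(F_1,F_2)\neq r$ and hence (a).

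The step I expect to be the main obstacle is precisely the passage in (a) from ``$w(F_1,F_2)=r$ is not an identity in the coefficients'' to ``$w(F_1,F_2)\neq r$ for our specific coefficients.'' This demands care on two points: first, the witness/nonvanishing argument must be run uniformly over the countably many reduced words $w$, which is legitimate since algebraic independence over $\Q$ simultaneously avoids all countably many proper $\Q$-algebraic conditions; second, one must check scrupulously that every constant entering the coefficients of $w(F_1,F_2)$ is rational, so that algebraic independence over $\Q$ is genuinely applicable — this is where the integrality of the combinatorial weights in \eqref{eq2} and of the composition coefficients in $G_c[[r]]$ is used. Once these are in place, (a) and (b) combine via $\hat P_\Gamma=\iota\circ\theta$ to give that $\hat P_\Gamma$ is a monomorphism, and Theorem \ref{te1} yields the universality of $\Gamma$.
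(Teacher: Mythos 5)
Your proof is correct, but it takes a genuinely different route from the paper's, which is considerably shorter because it outsources the hard part to a citation. The paper also reduces everything, via Theorem \ref{te1}, to injectivity of $\hat P_\Gamma$, but then it quotes \cite[Th.~2.4, Ex.~2.5]{B4} as a black box: for exactly this setting (polynomial pairs with jointly algebraically independent coefficients) that theorem gives both that $P(A_1'),P(A_2')$ generate a copy of $F_2$ and that any center of an equation whose coefficient pair is a finite $*$-product of $A_1',A_2'$ and their inverses is universal; the paper then finishes topologically: a universal center forces $A_g$ to be contractible in its image, hence in $\Gamma$, hence $g=1$. It never introduces your factorization $\hat P_\Gamma=\iota\circ\theta$, because contractibility of $A_g$ inside its own image already kills $g$. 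You instead prove the two injectivity statements by hand: $\theta$ via the combinatorial identification of $\Gamma$ with a rectangular model together with \cite[Th.~2.1]{B3}, and $\iota$ via the specialization/algebraic-independence argument with the rectangular configuration as witness. In effect you have inlined a proof of the free-subgroup content of \cite[Th.~2.4]{B4} --- which the paper itself describes as a corollary of \cite{B3} --- so your argument is more self-contained and makes transparent exactly where the algebraic independence hypothesis enters, at the cost of redoing work the paper cites; note also that \cite[Th.~2.4]{B4} corresponds to your part (a) plus the universality statement, not to your combinatorial part (b), so your parenthetical attribution there is slightly off.

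Two details you should make explicit to fully close the argument. First, the assertion that the unit-segment return maps $r/(1-r)$ and $r/\sqrt{1-2r^2}$ generate a free group of rank two is not the literal statement of \cite[Th.~2.1]{B3} (universality of rectangular curves); to extract it, treat separately reduced words $w$ with nonzero image in $\mathbb Z^2$ (the corresponding grid path is not closed, and vanishing of the first two coefficients $c_1,c_2$ of the return map would force closedness) and words in the kernel of $F_2\rightarrow\mathbb Z^2$ (a reduced word yields a reduced, hence non-contractible, loop in its rectangular image, and universality of that image rules out an identity return map). Second, the combinatorial isomorphism between $\Gamma$ and $\Gamma_{\mathrm{rect}}$, and hence the claim that $\theta$ is ``the same map'' for both, presupposes that $(m,n)\mapsto m\cdot A_1(1)+n\cdot A_2(1)$ is injective, i.e. that $A_1(1)$ and $A_2(1)$ admit no nontrivial integer relation; this too follows from the algebraic independence hypothesis and should be stated. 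Neither point is a genuine obstruction, so your proposal stands.
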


\begin{center}
\includegraphics[scale=1.4]{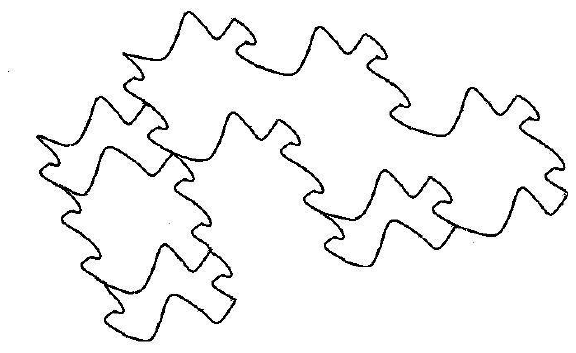}

Figure 2. An example of a curve $\Gamma$ composed of finitely many edges of a lattice $\mathcal L$. \smallskip
\end{center}

The following problem is a particular case of \cite[Question~3, page~484]{B6}:
\begin{P3}
Is it true that any connected piecewise linear curve $\Gamma\Subset\mathbb R^2$ containing the origin is universal?
\end{P3}
\begin{R}\label{rem0}
{\rm Some piecewise linear curves are universal due to Proposition \ref{prop3.2}. In general, one easily show that the first return map of the Abel equation $\frac{dv}{dt}=a v^2+bv^3$, $t\in [0,1]$, $a,b\in\mathbb R$, $|a|+|b|\ne 0$, is 
\[
P_{a,b}(r):=\left\{
\begin{array}{ccc}
\frac{1}{\psi_{a,b}^{-1}\bigl(a^2+\psi_{a,b}\bigl(\frac{1}{r}\bigr)\bigr)}&{\rm if}&a\ne 0\smallskip\\
\frac{r}{\sqrt{1-2br^2}}&{\rm if}&a=0;
\end{array}
\right|
\quad \text{here}\quad \psi_{a,b}(x):=-ax+b\ln |ax+b|,\ x\in\mathbb R.
\]
(Note that $\psi_{a,b}$ is invertible near $\pm\infty$ and $P_{a,b}$ admits a series expansion near $0$.)

Then Problem 3 is equivalent to the following one (cf. \cite{B3} and the main result of \cite{Co}):}
\begin{P3'}
Is it true that a composition $P_{a_1,b_1}\circ P_{a_2,b_2}\circ\cdots \circ P_{a_n,b_n}$ such that all vectors $(a_i,b_i)$ and $(a_{i+1}, b_{i+1})$, $1\le i\le n-1$, $n\in\mathbb N$, are noncollinear is never the identity map?
\end{P3'}
\end{R}
\subsection{Lipschitz Embeddings of Universal Curves}
By $\|\cdot\|_2$ we denote the Euclidean norm on $\mathbb R^2$. Let $K\Subset\mathbb R^2$ be a compact subset containing the origin. A continuous map $F:K\rightarrow\mathbb R^2$ is called a {\em Lipschitz embedding} of $K$ if there exist constants $c_1,c_2>0$ such that
\begin{equation}\label{embed}
c_1\|x-y\|_2\le\|F(x)-F(y)\|_2\le c_2\|x-y\|_2\quad\text{for all}\quad x,y\in K.
\end{equation}
The set of Lipschitz embeddings of $K$ sending $0$ to $0$ will be denoted by ${\mathcal E\mathcal L}_0(K)$. Clearly if $F\in {\mathcal E\mathcal L}_0(K)$, then $F^{-1}$ exists and belongs to
${\mathcal E\mathcal L}_0(F(K))$. By $I\in {\mathcal E\mathcal L}_0(K)$ we denote the identity map. One easily shows that if $F\in {\mathcal E\mathcal L}_0(K)$ satisfies \eqref{embed}, then $F+\lambda I\in {\mathcal E\mathcal L}_0(K)$ for all $\lambda\in (-c_1,c_1)$. 
\begin{Th}\label{embed1}
Let $\Gamma\Subset\mathbb R^2$ be a universal curve and $F\in {\mathcal E\mathcal L}_0(\Gamma)$ satisfy \eqref{embed} for some $c_1,c_2>0$. Then there is an at most countable subset $S$ of the open interval $(-c_1,c_1)$ such that for all $\lambda\in (-c_1,c_1)\setminus S$ the Lipschitz triangulable curves $(F+\lambda I)(\Gamma)$ are universal.
\end{Th}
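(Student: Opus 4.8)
The plan is to transport everything through the bi-Lipschitz homeomorphism $F_\lambda:=F+\lambda I$, reduce universality of the image curves to an injectivity statement governed by Theorem \ref{te1}, and then exploit that the relevant obstructions depend polynomially on $\lambda$.

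First I would record the basic structure. For $\lambda\in(-c_1,c_1)$ the map $F_\lambda\in{\mathcal E\mathcal L}_0(\Gamma)$ (as noted just before the statement), hence is a bi-Lipschitz homeomorphism of $\Gamma$ onto $\Gamma_\lambda:=F_\lambda(\Gamma)$ fixing the origin. Since the arcs $h_i$ triangulating $\Gamma$ compose with $F_\lambda$ to Lipschitz and locally bi-Lipschitz arcs, each $\Gamma_\lambda$ is again a connected Lipschitz triangulable curve containing $0$, and $(F_\lambda)_*:\pi_1(\Gamma)\to\pi_1(\Gamma_\lambda)$ is an isomorphism of free groups. By Theorem \ref{te1}, $\Gamma_\lambda$ is universal if and only if $\hat P_{\Gamma_\lambda}$ is a monomorphism, equivalently if and only if $Q_\lambda:=\hat P_{\Gamma_\lambda}\circ(F_\lambda)_*:\pi_1(\Gamma)\cong F_m\to G_c[[r]]$ is injective. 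Concretely, if $\tilde A$ is a closed Lipschitz representative of $\gamma\in\pi_1(\Gamma)$ with $a=\tilde A'\in\mathcal A_0(\Gamma)$, then $F_\lambda\circ\tilde A$ represents $(F_\lambda)_*\gamma$ and its derivative is $b_\lambda=\beta+\lambda a\in\mathcal A_0(\Gamma_\lambda)$, where $\beta=(F\circ\tilde A)'$; by \eqref{compos}, $Q_\lambda(\gamma)=P(b_\lambda)$.

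Next I would exploit polynomiality. Fixing once and for all a Lipschitz representative for every $\gamma$, each iterated integral $I_{i_1,\dots,i_k}(\beta+\lambda a)$ is a polynomial in $\lambda$ of degree at most $k$, so every return-map coefficient is a polynomial in $\lambda$ and $Q_\lambda(\gamma)(r)=r+\sum_n d_n^\gamma(\lambda)\,r^{n+1}$ with $d_n^\gamma\in\mathbb R[\lambda]$ (composition and inversion of series with coefficients polynomial in $\lambda$ stay polynomial). Hence the set $B_\gamma:=\{\lambda:Q_\lambda(\gamma)=\mathrm{id}\}=\bigcap_n\{d_n^\gamma=0\}$ is either finite or all of $\mathbb R$: a single nonzero $d_n^\gamma$ already forces finiteness. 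Since $\pi_1(\Gamma)$ is countable, the non-universal parameter set $\bigcup_{\gamma\ne e}B_\gamma$ is at most countable provided no $B_\gamma$ equals $\mathbb R$; equivalently, were it uncountable, a pigeonhole argument over the countably many $\gamma$ would produce a single nontrivial $\gamma$ with $B_\gamma$ uncountable, hence $B_\gamma=\mathbb R$. Thus the theorem reduces to the assertion that for no nontrivial $\gamma$ does $P(\beta+\lambda a)=\mathrm{id}$ hold for all $\lambda\in\mathbb R$.

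The main obstacle is exactly this last assertion, and it is where universality of $\Gamma$ must enter. Suppose $\gamma\ne e$ and $c_n(\beta+\lambda a)\equiv0$ in $\lambda$ for all $n$. Because both $\tilde A$ and $F\circ\tilde A$ are closed (so $\tilde b_{\lambda,1}(T)=\tilde b_{\lambda,2}(T)=0$), the ``pure'' iterated integrals $I_{(1^n)}(b_\lambda)$ vanish identically; this drops the $\lambda$-degree of $c_n(b_\lambda)$ from $n$ to $n-1$ and, after the identity $I_{(1,2)}+I_{(2,1)}=0$ forced by closedness, makes the leading coefficient a nonzero constant multiple of the moment $M_{n-2}(a)=\int_0^T a_2\,\tilde a_1^{\,n-2}\,ds$. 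Vanishing of all leading coefficients therefore yields $\int_0^T\tilde a_1^{\,k}\,d\tilde a_2=0$ for every $k\ge0$, i.e. all polynomial moments of the closed curve $\tilde A$ vanish, while the lower-order coefficients in $\lambda$ encode a full hierarchy of mixed relations between the signatures of $\tilde A$ and $F\circ\tilde A$. The heart of the proof is to upgrade these relations to the conclusion that $a$ is itself a center in $\Gamma$ (all $c_n(a)=0$) and then invoke universality of $\Gamma$ to deduce that $\tilde A$ is contractible, i.e. $\gamma=e$, a contradiction. I expect this passage from a whole family of return-map identities to genuine contractibility to be the crux, requiring an essential use of the injectivity of $\hat P_\Gamma$ together with the structural description of finitely generated subgroups of $G_c[[r]]$ from \cite{B3,B4}; the moment computation above is only its first and most transparent step.
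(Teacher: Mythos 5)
Your skeleton coincides with the paper's: for $\lambda\in(-c_1,c_1)$ the map $F_\lambda:=F+\lambda I$ is a Lipschitz embedding, $\Gamma_\lambda:=F_\lambda(\Gamma)$ is a connected Lipschitz triangulable curve through $0$, the loops $F_\lambda\circ\ell_\gamma$ (for fixed representatives $\ell_\gamma$) represent all of $\pi_1(\Gamma_\lambda)$, each coefficient of $P\bigl((F_\lambda\circ\ell_\gamma)'\bigr)$ is, by \eqref{eq2}, a real polynomial in $\lambda$ of degree at most its index, and by Theorem \ref{te1} together with \eqref{compos} it suffices to show that for every nontrivial $\gamma$ at least one of these polynomials is not identically zero; the exceptional set $S$ is then the countable union over $\gamma$ of the finite zero sets of such polynomials. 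Up to and including the dichotomy ``$B_\gamma$ is finite or all of $\mathbb R$'', your write-up is correct and matches the paper.

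The gap is precisely the statement you yourself label the crux and never prove: that $B_\gamma\ne\mathbb R$ for $\gamma\ne e$, i.e.\ that some $d_n^\gamma\not\equiv 0$. What you offer instead is a program, not a proof: even granting your moment computation (closedness kills the top $\lambda$-coefficient, and the next one reduces to a multiple of $\int_0^T\tilde a_1^{\,n-2}\,d\tilde a_2$), identical vanishing in $\lambda$ would only force all moments of $a$ to vanish, and moment vanishing is well known not to imply a center, let alone contractibility; the announced ``upgrade'' to $P(a)=\mathrm{id}$ is exactly the missing idea, and neither injectivity of $\hat P_\Gamma$ nor the subgroup results of \cite{B3,B4} is shown to produce it. The obstruction is structural: each $\lambda$-coefficient of $c_n(\beta+\lambda a)$ mixes pure iterated integrals of $a$ with mixed $\beta$, $a$ integrals of different lengths, so the hypothesis that $\Gamma$ is universal, which constrains only the pure $a$ data, cannot be invoked coefficientwise. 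For comparison, the paper disposes of this step by a point evaluation: it asserts that $c_{gj_g}(0)\ne 0$ for some $j_g$ ``since $\Gamma$ is universal''. That device is legitimate in Theorem \ref{category}, where the parameter space is all of $\mathcal L_0(\Gamma)$ and one evaluates at $G=I$; on the line $\{F+\lambda I\}$, however, $\lambda=0$ corresponds to the curve $F(\Gamma)$ rather than $\Gamma$, so the paper's one-line justification confronts exactly the difficulty you isolated. You deserve credit for identifying the true crux, but flagging it is not closing it: as written, your proposal does not prove the theorem.
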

In Remark \ref{rem1} we give an explicit description of set $S$.\smallskip

Let $\mathcal L_0(K)$ be the Banach space of Lipschitz maps $F: K\rightarrow\mathbb R^2$ such that $F(0)=0$ equipped with norm
\[
\|F\|:=\sup_{x\ne y}\frac{\|F(x)-F(y)\|_2}{\|x-y\|_2}.
\]
One easily shows that if $F\in {\mathcal E\mathcal L}_0(K)$ satisfies \eqref{embed}, then $F+G\in {\mathcal E\mathcal L}_0(K)$ for all $G\in \mathcal L_0(K)$ such that $\|G\|< c_1$. In particular, ${\mathcal E\mathcal L}_0(K)$ is a nonempty open subset of $\mathcal L_0(K)$.

Suppose $\Gamma\Subset\mathbb R^2$ is a universal curve. Let $U(\Gamma)\subset\mathcal E\mathcal L_0(\Gamma)$ be the subset of Lipschitz embeddings $F:\Gamma\hookrightarrow\mathbb R^2$, $F(0)=0$, such that curves $F(\Gamma)$ are universal. We equip $U(\Gamma)$ with topology induced from $\mathcal E\mathcal L_0(\Gamma)$. The following result shows that $U(\Gamma)$ is a ``massive'' dense subset of $\mathcal E\mathcal L_0(\Gamma)$.
\begin{Th}\label{category}
There exists an at most countable family of closed nowhere dense subsets $S_i\subset\mathcal E\mathcal L_0(\Gamma)$ such that $U(\Gamma)=\mathcal E\mathcal L_0(\Gamma)\setminus\bigl(\cup_i\, S_i\bigr)$.
\end{Th}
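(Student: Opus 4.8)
The plan is to exhibit the complement of $U(\Gamma)$ as a countable union of closed nowhere dense sets indexed by the nontrivial elements of $\pi_1(\Gamma)\cong F_m$. First I would fix, for each nontrivial $w\in\pi_1(\Gamma)$, a Lipschitz loop $\tilde A_w$ based at $0$ representing $w$, with a.e.-derivative $a_w\in\mathcal A_0(\Gamma)$. For $F\in{\mathcal E\mathcal L}_0(\Gamma)$ the composition $F\circ\tilde A_w$ is a Lipschitz loop in $F(\Gamma)$ representing $F_*w\in\pi_1(F(\Gamma))$, and its a.e.-derivative $b:=(F\circ\tilde A_w)'$ lies in $\mathcal A_0(F(\Gamma))$; by \eqref{compos} one has $\hat P_{F(\Gamma)}(F_*w)=P(b)$. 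Writing $P(b)(r)=r+\sum_i c_i^w(F)r^{i+1}$, I define $S_w:=\bigcap_{i\ge1}(c_i^w)^{-1}(0)$. Since $F$ is a bi-Lipschitz homeomorphism, $F(\Gamma)$ is again a Lipschitz triangulable curve through the origin and $F$ induces an isomorphism $F_*:\pi_1(\Gamma)\to\pi_1(F(\Gamma))$, so by Theorem \ref{te1} the curve $F(\Gamma)$ is universal iff $\hat P_{F(\Gamma)}\circ F_*$ is injective, i.e. iff $F\notin\bigcup_{w\neq e}S_w$. As $F_m$ is countable this already yields $U(\Gamma)={\mathcal E\mathcal L}_0(\Gamma)\setminus\bigcup_{w\neq e}S_w$; it then remains to show that every $S_w$ is closed and nowhere dense.

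For closedness I would prove that each functional $c_i^w:{\mathcal E\mathcal L}_0(\Gamma)\to\mathbb R$ is continuous, whence $S_w$, an intersection of zero sets, is closed. By \eqref{eq2} each $c_i^w(F)$ is a fixed finite weighted sum of iterated integrals $I_{i_1,\dots,i_k}(b)$ with $i_1+\cdots+i_k=i$, so it suffices to show that $F\mapsto I_{i_1,\dots,i_k}\!\left((F\circ\tilde A_w)'\right)$ is continuous. If $F_n\to F$ in $\mathcal L_0(\Gamma)$, then the scalar Lipschitz functions $\phi_j^{(n)}:=(F_n)_{i_j}\circ\tilde A_w$ converge uniformly to $\phi_j:=F_{i_j}\circ\tilde A_w$ (because $\|F_n-F\|$ controls the sup-norm on the compact $\Gamma$), while their derivatives stay bounded in $L^\infty$ uniformly in $n$. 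Integrating the iterated integral by parts repeatedly moves all derivatives off the inner factors, reducing the claim to uniform convergence of the $\phi_j^{(n)}$ together with the uniform $L^1$-bound on $(\phi_j^{(n)})'$; a straightforward induction on $k$ then gives convergence of the iterated integrals, hence continuity of $c_i^w$.

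For nowhere density I would invoke Theorem \ref{embed1}. Fix $F\in{\mathcal E\mathcal L}_0(\Gamma)$ satisfying \eqref{embed} with constants $c_1,c_2$; the theorem furnishes an at most countable $S\subset(-c_1,c_1)$ such that $(F+\lambda I)(\Gamma)$ is universal for every $\lambda\in(-c_1,c_1)\setminus S$. For such $\lambda$ the curve $(F+\lambda I)(\Gamma)$ is universal, hence $F+\lambda I\notin\bigcup_{w'}S_{w'}$ and in particular $F+\lambda I\notin S_w$. Since $F+\lambda I\to F$ in $\mathcal L_0(\Gamma)$ as $\lambda\to0$ and all but countably many small $\lambda$ are admissible, no neighbourhood of $F$ is contained in $S_w$; thus $S_w$ has empty interior, and being closed it is nowhere dense. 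Taking $\{S_i\}$ to be the at most countable family $\{S_w:w\in\pi_1(\Gamma)\setminus\{e\}\}$ completes the argument.

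The main obstacle I anticipate is the continuity step: because $F$ is merely Lipschitz, $(F\circ\tilde A_w)'$ is only defined almost everywhere and the a.e. chain rule must be handled with care, so the convergence of the iterated integrals cannot be read off from uniform convergence alone and genuinely needs the combination of uniform convergence of the primitives with the uniform bounded-variation bound supplied by the Lipschitz topology. The remaining ingredients—the reduction to the sets $S_w$ via Theorem \ref{te1} and the nowhere density via Theorem \ref{embed1}—are by comparison formal.
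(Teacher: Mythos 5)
Your proposal is correct, and its skeleton coincides with the paper's: the paper also writes $U(\Gamma)=\mathcal E\mathcal L_0(\Gamma)\setminus\bigcup_{g}\bigl(\bigcap_j \mathcal Z_{gj}\bigr)$, where $\mathcal Z_{gj}$ is the zero set of the coefficient functional $c_{gj}(G)$ of $P\bigl((G\circ\ell_g)'\bigr)$ and $g$ runs over $\pi_1(\Gamma)$, exactly your sets $S_w$. Where you genuinely diverge is in how the two key properties of these sets are obtained. The paper gets both at one stroke from an algebraic observation: by \eqref{eq2} and the Rademacher theorem, each $c_{gj}$ is a real polynomial of degree at most $j$ on the Banach space $\mathcal L_0(\Gamma)$; universality of $\Gamma$ gives $c_{gj_g}(I)\ne 0$ for some $j_g$, so $c_{gj_g}\not\equiv 0$, and the zero set of a not-identically-zero (continuous) polynomial on a Banach space is closed and nowhere dense, so each $\bigcap_j\mathcal Z_{gj}\subset\mathcal Z_{gj_g}$ is too. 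You instead prove closedness by a bespoke continuity argument and nowhere density by running Theorem \ref{embed1} along the segment $\{F+\lambda I\}_{|\lambda|<c_1}$; this is legitimate (Theorem \ref{embed1} precedes Theorem \ref{category} and is proved independently, by one-variable polynomiality in $\lambda$), and it is an economical reuse of an available result, at the cost of hiding the uniform mechanism --- polynomiality of the $c_{gj}$ --- that the paper exploits directly. One simplification you should note: the continuity step you flag as the ``main obstacle'' is much easier than you anticipate. The map $G\mapsto (G\circ\tilde A_w)'$ is a \emph{bounded linear} operator from $\mathcal L_0(\Gamma)$ to $L^\infty([0,T];\mathbb R^2)$, since $\bigl\|(G\circ\tilde A_w)'\bigr\|_{L^\infty}\le \|G\|\cdot{\rm Lip}(\tilde A_w)$ (no chain rule is needed, only Rademacher applied to $G\circ\tilde A_w$ and linearity of a.e.\ differentiation). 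Hence $F_n\to F$ in $\mathcal L_0(\Gamma)$ forces $(F_n\circ\tilde A_w)'\to (F\circ\tilde A_w)'$ in $L^\infty$, and each iterated integral $I_{i_1,\dots,i_k}$, being a bounded $k$-linear form in its $L^\infty$ arguments, converges; your integration by parts and induction on $k$ are unnecessary. This observation is precisely what makes the paper's one-line claim --- that $c_{gj}$ is a continuous polynomial of degree $\le j$ on $\mathcal L_0(\Gamma)$ --- true, so incorporating it would both repair your heaviest step and reconnect your argument with the paper's. Finally, you were right to index the union by \emph{nontrivial} $w$ only; the paper's displayed identity formally includes $g=1$, for which all $c_{gj}\equiv 0$ and the inner intersection is vacuous, so your restriction is the correct reading.
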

In view of this result, the following problem seems to be plausible:
\begin{P4}
Let $\Gamma\Subset\mathbb R^2$ be a connected Lipschitz triangulable curve containing the origin. Is it true that there exists an $F\in\mathcal E\mathcal L_0(\Gamma)$ such that curve $F(\Gamma)$ is universal?
\end{P4}

\subsection{Approximation of Lipschitz Triangulable Curves by Universal Ones}
In what follows, $\mathcal H^1$ denotes the Hausdorff $1$-measure on $\mathbb R^2$ and $d_{\mathcal H}$ the Hausdorff metric on the set of compact subsets of $\mathbb R^2$. 
Let $\Gamma\Subset\mathbb R^2$ be a Lipschitz triangulable curve containing the origin. We say that $x\in\Gamma$ has {\em order} $k$ (written ${\rm ord}(x)=k$) if for all sufficiently small open disks $D_x$ centered at $x$, $\Gamma\setminus\{x\}$ has $k$ connected components in $D_x\setminus\{x\}$. (Note that since $\Gamma$ is triangulable, ${\rm ord}(x)$ is correctly defined.) By $S_{\Gamma}\subset\Gamma$ we denote the (finite) set of points of order $\ne 2$. Our main result is 
\begin{Th}\label{te3}
Suppose $\Gamma$ is piecewise linear. Then there exists a sequence $\{\Gamma_i\}_{i\in\mathbb N}$ of Lipschitz triangulable curves containing the origin such that 
\begin{itemize}
\item[(a)]
Each $\Gamma_i$ is a universal curve with $\pi_1(\Gamma_i)\cong\pi_1(\Gamma)$;\smallskip
\item[(b)]
${\rm ord}(x)\le 3$ for all $x\in\Gamma_i$, $i\in\mathbb N$;\smallskip
\item[(c)]
Each $\Gamma_i\setminus S_{\Gamma_i}$ is a closed $C^\infty$ submanifold of $\mathbb R^2\setminus S_{\Gamma_i}$;\smallskip
\item[(d)]
\[
\lim_{i\rightarrow\infty}\mathcal H^1(\Gamma_i)=\mathcal H^1(\Gamma)\quad\text{and}\quad \lim_{i\rightarrow\infty}d_{\mathcal H}(\Gamma_i,\Gamma)=0.
\]
\end{itemize}
\end{Th}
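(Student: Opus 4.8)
The plan is to build each approximant $\Gamma_i$ from the given piecewise linear $\Gamma$ in three stages carried out inside arbitrarily small neighbourhoods: first cut the orders of all vertices down to at most $3$, then replace the resulting polygonal arcs by regular polynomial arcs so as to secure (b), (c) and the estimates in (d), and finally put the coefficients of these arcs into general position so that universality follows from the known results on curves built of algebraically independent polynomial edges. Taking the supporting parameters to $0$ then produces a sequence with $d_{\mathcal H}(\Gamma_i,\Gamma)\to 0$ and $\mathcal H^1(\Gamma_i)\to\mathcal H^1(\Gamma)$. \emph{Stage 1 (reduction of orders).} If $v\in\Gamma$ has $\mathrm{ord}(v)=k>3$, then $k$ segments emanate from $v$; inside a small disk $D_v$ I replace the star at $v$ by an embedded binary tree $T_v\subset D_v$ with $k$ leaves, reattaching the $j$-th incident segment to the $j$-th leaf, so that the internal nodes of $T_v$ acquire order $3$ and the leaves become order-$2$ pass-through points. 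Since $T_v$ is contractible and the change is supported in $D_v$, the homotopy type is preserved; performing this at every high-order vertex (and keeping the origin as a node of its tree) yields a piecewise linear $\Gamma'$ with $\mathrm{ord}\le 3$, $0\in\Gamma'$, $\pi_1(\Gamma')\cong\pi_1(\Gamma)$, and with $d_{\mathcal H}(\Gamma',\Gamma)$ and $|\mathcal H^1(\Gamma')-\mathcal H^1(\Gamma)|$ as small as we wish, only finitely many trees of small diameter having been inserted.

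\emph{Stage 2 (smoothing).} Let $S'\subset\Gamma'$ be the set of points of order $\ne 2$; between consecutive points of $S'$ lie finitely many maximal polygonal arcs $\sigma_1,\dots,\sigma_N$. I replace each $\sigma_\ell$ by a regular polynomial arc $h_\ell:[0,1]\to\mathbb R^2$ with the same endpoints: writing $\sigma_\ell$ in arc-length form and approximating its piecewise constant velocity in $L^1$ by a nonvanishing polynomial vector field $q_\ell$, the arc $h_\ell(t):=\sigma_\ell(0)+\int_0^t q_\ell$ (corrected by a negligible polynomial term to pin down $h_\ell(1)$) is $C^0$-close to $\sigma_\ell$ and satisfies $\mathcal H^1(h_\ell)\to\mathcal H^1(\sigma_\ell)$. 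For a sufficiently fine choice the $h_\ell$ remain embedded and pairwise disjoint off the junctions, so the underlying graph — hence $\pi_1$, the orders, and the singular set — is preserved. As each $h_\ell$ is a polynomial immersion, every maximal arc is the image of a single $C^\infty$ embedding, whence $\Gamma''\setminus S_{\Gamma''}$ is a closed $C^\infty$ submanifold of $\mathbb R^2\setminus S_{\Gamma''}$; thus (b) and (c) hold and (d) is maintained.

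\emph{Stage 3 (general position and universality).} Collect the finitely many real coefficients of the arcs $h_\ell$ together with the coordinates of the junction points not pinned to the origin. The locus where these reals satisfy a nontrivial polynomial relation over $\mathbb Q$ is a countable union of proper algebraic subvarieties, hence meager and Lebesgue-null; therefore an arbitrarily small perturbation — preserving embeddedness, orders, $\pi_1$, and the approximation, all of which are open conditions — renders this collection algebraically independent over $\mathbb Q$. For the resulting curve $\Gamma''$, built from finitely many polynomial edges with algebraically independent coefficients and passing through the origin, \cite[Th.~2.4]{B4} (a corollary of the rectangular-curve theorem \cite[Th.~2.1]{B3}) guarantees that $\Gamma''$ is universal; equivalently, by Theorem~\ref{te1}, the homomorphism $\hat P_{\Gamma''}$ is injective. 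Setting $\Gamma_i:=\Gamma''$ for the $i$-th (shrinking) choice of parameters gives the desired sequence.

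The routine but delicate part is the bookkeeping in Stage 2: the polynomial velocities $q_\ell$ must be chosen fine enough that the $h_\ell$ stay embeddings meeting only at the prescribed junctions (so that the combinatorial type, and with it $\pi_1(\Gamma_i)\cong\pi_1(\Gamma)$ and the order bound, is \emph{exactly} preserved) while their lengths converge to those of the $\sigma_\ell$; a compactness argument in the spirit of \cite[Ex.\,5.1]{BY} handles the embeddedness. The genuine analytic content — that algebraic independence of the edge coefficients forces the images of the free generators of $\pi_1(\Gamma'')$ to generate a free subgroup of $G_c[[r]]$ — is supplied wholesale by \cite[Th.~2.4]{B4}, and it is there, rather than in the geometric construction, that the real difficulty of the statement resides.
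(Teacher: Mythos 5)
Your Stages 1 and 2 are reasonable in outline (they roughly parallel the reductions the paper itself makes), but Stage 3 --- the step you yourself identify as carrying all the analytic weight --- cannot be executed, and the obstruction is structural rather than a matter of bookkeeping. The coefficients you propose to make algebraically independent over $\mathbb Q$ are tied together by the incidence relations of the graph, and those are polynomial relations \emph{with rational coefficients}. Concretely, suppose $\pi_1(\Gamma)\ne 1$ (the only case with content, since for $\pi_1(\Gamma)=F_0$ universality is automatic by Proposition \ref{prop3.1}). Then $\Gamma''$ contains a cycle of edges $h_{\ell_1},\dots,h_{\ell_m}$, and the condition that this cycle closes up reads
\[
\sum_{j=1}^m \epsilon_j\int_0^1 h_{\ell_j}'(t)\,dt=0,\qquad \epsilon_j\in\{\pm 1\}.
\]
Writing $h_{\ell}'(t)=\sum_k c_{\ell,k}t^k$, each integral equals $\sum_k c_{\ell,k}/(k+1)$, so the displayed identity is a nontrivial $\mathbb Q$-linear relation among the coefficients of the velocity polynomials (and including the junction coordinates in your collection, as you do, only adds further rational relations of the form $h_\ell(1)=v$). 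Hence \emph{every} admissible configuration of arcs forming the graph satisfies nontrivial polynomial relations over $\mathbb Q$, and no small perturbation can remove them: a perturbation that keeps the arcs meeting at common junctions --- an equality constraint, not an open condition, and indispensable for keeping $\Gamma''$ connected with $\pi_1(\Gamma'')\cong\pi_1(\Gamma)$ --- necessarily preserves these relations. So the hypothesis of \cite[Th.~2.4]{B4} is unattainable for your $\Gamma''$ precisely when $\pi_1(\Gamma)\ne 1$, and Stage 3 collapses. (A secondary point: the present paper invokes that theorem only for \emph{two} generating paths, via Proposition \ref{prop3.2}; nothing here supports its use for $N$ distinct edge paths.)

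Note how Proposition \ref{prop3.2} evades exactly this trap: there the lattice $\mathcal L$ is built from translates of only two paths $A_1,A_2$, so every cycle closes up identically (translates of each $A_i$ occur with cancelling signs) and no relation is forced on the coefficients of $A_1,A_2$ themselves; moreover the Abel data of every edge is one of just two velocity pairs. For a general piecewise linear $\Gamma$ this device is unavailable, which is why the paper's proof of Theorem \ref{te3} runs along a different line: it replaces $\Gamma$ by a rectangular curve $\Gamma_r$ --- universal \emph{unconditionally} by \cite[Th.~2.1]{B3} (resting on \cite{Co}), with no generic-position hypotheses to arrange --- and then deforms $\Gamma_r$ back onto $\Gamma$ through an explicit isotopy $\Gamma_t$. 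Lemma \ref{le4.2} shows that every iterated integral of the deformed loops is a polynomial in $t$; since $\Gamma_0=\Gamma_r$ is universal, for each nontrivial $g\in\pi_1$ some coefficient $c_{gj_g}$ satisfies $c_{gj_g}(0)\ne 0$, hence has finitely many zeros, and countability of $\pi_1$ yields a countable bad set $Z\subset[0,1]$ off which $\Gamma_t$ is universal (Proposition \ref{prop4.3}); taking $t_i\to 1$ off $Z$ gives (a) and (d) --- note that (d) would fail for the rectangular curve itself, since staircase approximation inflates length, which is why the isotopy and not $\Gamma_r$ supplies the approximants --- and a second, smoothing isotopy of the same kind gives (b) and (c). To salvage your architecture you would have to replace Stage 3 by such a deformation-plus-countable-exceptional-set argument anchored at a curve whose universality is known without independence hypotheses.
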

If (d) is valid for a sequence of curves $\{\Gamma_i\}_{i\in\mathbb N}$ and a curve $\Gamma$, then we say that $\{\Gamma_i\}_{i\in\mathbb N}$ converges to $\Gamma$ with respect to ${\mathcal H^1}$-measure and metric $d_{\mathcal H}$.
Since each Lipschitz triangulable curve can be approximated with respect to ${\mathcal H^1}$-measure and metric $d_{\mathcal H}$ by a sequence of piecewise linear curves (because each Lipschitz arc $[0,1]\rightarrow\mathbb R^2$ admits such approximation), as an immediate corollary of Theorem \ref{te3} we obtain:
\begin{C}\label{cor4}
Suppose $\Gamma\Subset\mathbb R^2$ is a Lipschitz triangulable curve containing the origin. Then there exists a sequence $\{\Gamma_i\}_{i\in\mathbb N}$ of universal curves converging to $\Gamma$ with respect to ${\mathcal H^1}$-measure and metric $d_{\mathcal H}$ such that each $\Gamma_i\setminus S_{\Gamma_i}$ is a closed $C^\infty$ submanifold of $\mathbb R^2\setminus S_{\Gamma_i}$.
\end{C}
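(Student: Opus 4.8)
The plan is to combine the piecewise-linear density statement recorded immediately before the corollary with Theorem \ref{te3} through a diagonalization, so that no new geometric construction is needed. First I would fix a sequence $\varepsilon_n\downarrow 0$ and, for each $n$, produce a connected piecewise linear curve $\Lambda_n\Subset\mathbb R^2$ containing the origin with $d_{\mathcal H}(\Lambda_n,\Gamma)<\varepsilon_n$ and $|\mathcal H^1(\Lambda_n)-\mathcal H^1(\Gamma)|<\varepsilon_n$. Here I would use that $\Gamma$ is Lipschitz triangulable: writing $\Gamma\setminus S=\sqcup_{i}\,h_i(0,1)$ with the $h_i$ Lipschitz and locally bi-Lipschitz, I would replace each arc $h_i$ by an inscribed polygon on a sufficiently fine partition of $(0,1)$, retaining the finitely many partition points that map to the vertices in $S$ and, if necessary, the point mapping to $0$. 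Inscribed polygons of a Lipschitz arc converge to it in the Hausdorff metric, and their lengths increase to the length of the arc, which equals its $\mathcal H^1$-measure since the arc is simple (the images $h_i(0,1)$ are pairwise disjoint and each $h_i$ is injective). Summing over the finitely many arcs while preserving the gluing pattern at $S$ yields a connected piecewise linear curve $\Lambda_n\ni 0$ satisfying the two required estimates.

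Next, for each fixed $n$, I would apply Theorem \ref{te3} to the piecewise linear curve $\Lambda_n$. This produces a sequence $\{\Lambda_n^{(j)}\}_{j}$ of universal curves, each with $\Lambda_n^{(j)}\setminus S_{\Lambda_n^{(j)}}$ a closed $C^\infty$ submanifold of $\mathbb R^2\setminus S_{\Lambda_n^{(j)}}$, and with $\lim_j d_{\mathcal H}(\Lambda_n^{(j)},\Lambda_n)=0$ and $\lim_j \mathcal H^1(\Lambda_n^{(j)})=\mathcal H^1(\Lambda_n)$. Choosing $j=j(n)$ large enough, I would set $\Gamma_n:=\Lambda_n^{(j(n))}$ so that
\[
d_{\mathcal H}(\Gamma_n,\Lambda_n)<\varepsilon_n,\qquad |\mathcal H^1(\Gamma_n)-\mathcal H^1(\Lambda_n)|<\varepsilon_n.
\]

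Finally, $\{\Gamma_n\}_{n\in\mathbb N}$ is the desired sequence. Each $\Gamma_n$ is universal and each $\Gamma_n\setminus S_{\Gamma_n}$ is a closed $C^\infty$ submanifold by Theorem \ref{te3}, while the triangle inequality for $d_{\mathcal H}$ together with the inequalities above give $d_{\mathcal H}(\Gamma_n,\Gamma)\le d_{\mathcal H}(\Gamma_n,\Lambda_n)+d_{\mathcal H}(\Lambda_n,\Gamma)<2\varepsilon_n\to 0$ and, likewise, $|\mathcal H^1(\Gamma_n)-\mathcal H^1(\Gamma)|<2\varepsilon_n\to 0$. The only point requiring care — and the main, if modest, obstacle — is the first paragraph: ensuring that the polygonal approximants $\Lambda_n$ are genuine connected piecewise linear \emph{curves} through the origin and that the two approximations ($\mathcal H^1$-measure and $d_{\mathcal H}$) hold simultaneously. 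The $d_{\mathcal H}$ bound is immediate from uniform convergence of inscribed polygons, whereas the $\mathcal H^1$ bound relies on rectifiability and simplicity of the arcs $h_i$ (so that, for small mesh, the inscribed polygons are themselves simple and pairwise disjoint off $S$, making their polygonal length equal to their $\mathcal H^1$-measure). Once these routine points are in place, the corollary follows formally from Theorem \ref{te3} via the diagonal choice of $j(n)$.
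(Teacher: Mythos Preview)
Your proposal is correct and follows exactly the route the paper indicates: the paper states the corollary as an immediate consequence of Theorem~\ref{te3} together with the observation that every Lipschitz triangulable curve can be approximated in $\mathcal H^1$-measure and $d_{\mathcal H}$ by piecewise linear curves, and you carry out precisely this two-step approximation plus diagonalization. The only difference is that you spell out the polygonal approximation (inscribed polygons on the Lipschitz arcs, gluing at $S$, simultaneous control of $d_{\mathcal H}$ and $\mathcal H^1$) in more detail than the paper, which merely asserts it in one parenthetical clause.
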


The following problem is related to the composition conjecture for Abel equations with polynomial coefficients (see the Introduction).
\begin{P5}
Do there exist universal curves $\Gamma\Subset \mathbb R^2$ with $\pi_1(\Gamma)\cong F_m$, $m\ge 2$, which are images of analytic maps $[0,T]\rightarrow\mathbb R^2$? 
The same question is for images of polynomial maps.
\end{P5}
Regarding to this problem we mention that universal centers of Abel equations with real analytic coefficients can be easily described in terms of vanishing of finitely many moments in the coefficients of the equations, see the algorithm in the Introduction of \cite{B7}.
\subsection{Remarks on Nonuniversal Curves}
While the structure of universal centers of equation \eqref{eq1} is well understood, nonuniversal centers of this equation are of obscure nature requiring further scrutiny.
If equation \eqref{eq1} has a nonuniversal center on $[0,T]$ and the image $\Gamma_{\tilde A}$ of the corresponding map $\tilde A$ is Lipschitz triangulable, then $\Gamma_{\tilde A}$ is a nonuniversal curve; in particular, homomorphism $\hat P_{\Gamma_{\tilde A}}:\pi_1(\Gamma_{\tilde A})\rightarrow G_c[[r]]$ (see \eqref{compos}) has a nontrivial kernel (enclosing the minimal normal subgroup containing the element $[\tilde A]\in\pi_{1}(\Gamma_{\tilde A})$ represented by path $\tilde A$).
 This implies that for every $b=(b_1,b_2)\in\mathcal A_0(\Gamma_{\tilde A})$ such that the closed path $\tilde B:=\int_0^{\cdot}b(s)\,ds :[0,T]\rightarrow \Gamma_{\tilde A}$ is homotopic to a nontrivial element of ${\rm Ker}\, \hat P_{\Gamma_{\tilde A}}$ the corresponding Abel equation $\frac{dv}{dt}=b_1 v^2+b_2 v^3$ has a nonuniversal center on $[0,T]$.  Abel equations with nonuniversal centers can be obtained, e.g., by the application of the Cherkas transformation to certain elements of the Lotka-Volterra or Darboux  components of the set of centers of planar polynomial vector fields of degree 2, see \cite[Sect.~2.5.3]{Bl}. In these case coefficients $(a_1,a_2)$ of the obtained  Abel equations are trigonometric polynomials of degrees $\le 4$; hence the  corresponding curves $\Gamma_{\tilde A}$ are
Lipschitz triangulable (and nonuniversal). We mention also the following simple property: if $\Gamma\subset\tilde\Gamma\Subset\mathbb R^2$ is a pair of 
connected Lipschitz triangulable curves containing the origin and $\Gamma$ is nonuniversal, then $\tilde\Gamma$ is nonuniversal as well.

\sect{Proofs}
\begin{proof}[Proof of Theorem \ref{te1}]
First, assume that $\Gamma$ is universal. Suppose $\hat P(g)=1\in G_c[[r]]$ for some $g\in\pi_1(\Gamma)$. Let
$a_g\in\mathcal A_0(\Gamma)$ be such that $\Psi_\Gamma(a_g)=g$. Then due to \eqref{compos}, $P(a_g)=1$, i.e. equation \eqref{eq1} corresponding to $a_g$ has center on $[0,T]$. Since $\Gamma$ is universal, this center is universal.
Then by \cite[Th.\,1.14]{B2}, the closed Lipschitz path $\tilde A_g:=\int_0^{\cdot} a_g(s)\,ds : [0,T]\rightarrow\Gamma$ is contractible, that is
$g=\Psi_\Gamma(a_g)=1\in\pi_1(\Gamma)$. This shows that $\hat P_\Gamma$ is a monomorphism.

Conversely, suppose that $\hat P_\Gamma$ is a monomorphism. Let $a\in\mathcal A_0(\Gamma)$ be such that the corresponding equation \eqref{eq1} has center on $[0,T]$. Due to \eqref{compos} and our assumption this implies that $\Psi_\Gamma(a)=1\in\pi_1(\Gamma)$. Then the closed Lipschitz path $\tilde A: [0,T]$ is contractible. In turn, by the results of \cite{B2}, the corresponding center is universal. So $\Gamma$ is universal by our definition.
\end{proof}
\begin{proof}[Proof of Proposition \ref{prop3.1}]
If $\pi_1(\Gamma)=F_0$, then $\hat P_\Gamma: F_0\rightarrow G_c[[r]]$ is obviously a monomorphism; thus such curve $\Gamma$ is universal by Theorem \ref{te1}.

Next, if $\pi_1(\Gamma)\cong F_1$ but $\Gamma$ is not universal, then due to Theorem \ref{te1}
$\hat P_\Gamma$ maps $\pi_1(\Gamma)$ to $1\in G_c[[r]]$.  Let  $g$ be a generator of $\pi_1(\Gamma)$ and $a_g=(a_{g1},a_{g2})\in\mathcal A_0(\Gamma)$ be such that $\Psi(a_g)=g$. Then according to our hypothesis and \eqref{compos}, equation \eqref{eq1} corresponding to $a_g$ has center on $[0,T]$. In particular, from \eqref{eq2} with $i=3$ (see also notation \eqref{tildea}) we obtain
\begin{equation}\label{equ3.1}
3\cdot\int_0^T \tilde a_{g1}(s)\cdot a_{g2}(s)\, ds+2\cdot\int_0^T \tilde a_{g2}(s)\cdot a_{g1}(s)\, ds=0.
\end{equation}
Also, since $\tilde a_{gi}(T)=0$, $i=1,2$, we have
\[
\int_0^T \tilde a_{g1}(s)\cdot a_{g2}(s)\, ds+\int_0^T \tilde a_{g2}(s)\cdot a_{g1}(s)\, ds=\bigl(\tilde a_1(t)\cdot \tilde a_2(t)\bigr)|_0^T=0.
\]
Thus \eqref{equ3.1} is equivalent to
\begin{equation}\label{eq3.2}
\int_0^T \tilde a_{g1}(s)\cdot a_{g2}(s)\, ds=0.
\end{equation}
The latter can be rewritten as the contour integral of $1$-form $xdy$ over the closed path $\hat A_g:=\int_0^{\cdot} a_g(s)\, ds : [0,T]\rightarrow\Gamma$:
\[
\int_{\hat A_g}xdy=0.
\]
Since $\pi_1(\Gamma)\cong\mathbb Z$ and $\Gamma$ is Lipschitz triangulable, by the Jordan theorem $\mathbb R^2\setminus\Gamma$ contains only one bounded component $D_\Gamma$ homeomorphic to the open unit disk. In turn, by our definition $\hat A_g$ represents the generator $g\in\pi_1(\Gamma)$ and so by the Green formula we obtain
\[
0=\left|\int_{\hat A_g}xdy\right|=\left|\int_{D_\Gamma} dxdy\right|=Area(D)\ne 0,
\]
a contradiction which shows that $\hat P_{\Gamma}(g)\ne 1$, i.e. by Theorem \ref{te1} $\Gamma$ is a universal curve.
\end{proof}
\begin{proof}[Proof of Proposition \ref{prop3.2}]
Let $\Gamma\Subset\mathcal L$ be a curve satisfying conditions of the proposition. According to Theorem \ref{te1} we have to prove that homomorphism $\hat P_\Gamma:\pi_1(\Gamma)\rightarrow G_c[[r]]$ is injective. Consider
elements $a_i:=A_1'\in\mathcal A$, $i=1,2$ (here $'$ stands for the derivative of a path). According to \cite[Th.~2.4, Ex.~2.5]{B4} the first return maps $P(a_i)\in G_c[[r]]$, $i=1,2$,  generate a subgroup isomorphic to $F_2$. Further, each element $g\in\pi_1(\Gamma)$ is represented by a path $A_g: [0,1]\rightarrow\Gamma$ which is the product (in the sense of algebraic topology) of finitely many paths $A_i$ and their inverses. Assume that 
$\hat P_\Gamma(g)=1\in G_c[[r]]$. Then due to \eqref{compos} $P(A_g')=1\in G_c[[r]]$. Thus, the Abel equation \eqref{eq1} corresponding to the pair of coefficients $A_g'$ has center on $[0,1]$. But by our definition $A_g'$ is a finite $*$-product of elements $a_i$ and their inverses. Then due to \cite[Th.~2.4]{B4} this center is universal. This implies, see \cite{B2}, that path $A_g$ is closed and contractible inside its image. But $A_g$ has image in $\Gamma$; hence $A_g$ is contractible in $\Gamma$. This is equivalent to $g=1\in\pi_1(\Gamma)$. Therefore $\hat P_\Gamma$ is a monomorphism and by Theorem \ref{te1}, $\Gamma$ is a universal curve.
\end{proof}
\begin{proof}[Proof of Theorem \ref{embed1}]
Let $\ell_g: [0,T]\rightarrow\Gamma$ be a closed Lipschitz path representing an element $g\in\pi_1(\Gamma)$. Then for each $\lambda\in (-c_1,c_1)$ the countable family of paths $\{\ell_{g\lambda}\}_{g\in\pi_1(\Gamma)}$, $\ell_{g\lambda}:=(F+\lambda I)(\ell_g)$, represents all elements of the fundamental group $\pi_1(\Gamma_\lambda)\, (\cong\pi_1(\Gamma))$, $\Gamma_\lambda:=(F+\lambda I)(\Gamma)$. According to formula \eqref{eq2} the first return map of the Abel equation corresponding to the pair of coefficients $\ell_{g\lambda}'\in\mathcal A$ has the form
\[
P(\ell_{g\lambda}')(r)=r+\sum_{j=1}^\infty c_{gj}(\lambda)r^{j+1},
\]
where $c_{gj}$ is a real polynomial in $\lambda$ of degree at most $j$\, $(1\le j<\infty)$. 

Since $\Gamma$ is universal, the first return map $P(\ell_g')\subset G_c[[r]]$ is not the identity map (see Theorem \ref{te1}).  Thus there exists $j_g\in\mathbb N$ such that $c_{gj_g}(0)\ne 0$.
This implies that $c_{gj_g}$ is a not identically zero polynomial in $\lambda$ of degree at most $j_g$. By $S_{j_g}$ we denote the set of its zeros in $(-c_1,c_1)$. Clearly, ${\rm card}\, S_{j_g}\le j_g$. We define
\[
S:=\bigcup_{g\in \pi_1(\Gamma)}\, S_{j_g}.
\]
Since $\pi_1(\Gamma)$ is countable, the set $S\subset (-c_1,c_1)$ is at most countable. By definition, for each $t\in (-c_1,c_1)\setminus S$  and $g\in \pi_1(\Gamma)$ we have $P(\ell_{g\lambda}')\ne {\rm id}$.
This shows that the kernel of the corresponding homomorphism $\hat{P}_{\Gamma_\lambda}:\pi_1(\Gamma_\lambda)\rightarrow G_c[[r]]$ (see \eqref{compos}) is trivial. Therefore by Theorem \ref{te1},
$\Gamma_\lambda$ is a universal curve.

The proof of the theorem is complete.
\end{proof}
\begin{R}\label{rem1}
{\rm It is easily seen that coefficients of all polynomials $c_{gj}$ belong to the minimal numerical field $\mathbb F\subset\mathbb R$ containing all iterated integrals in $\ell_g'$ and $\ell_{g0}'$. Therefore to make sure that all $c_{gj}(\lambda_0)\ne 0$ it suffices to choose $\lambda_0$ being a transcendental number over $\mathbb F$. Since $\mathbb F$ is a countable set, its algebraic closure ${\rm cl}_{a}(\mathbb F)$ is countable as well. Thus we may define the required set $S$ of the theorem as $S:=(-c_1,c_1)\cap {\rm cl}_{a}(\mathbb F)$.}
\end{R}
\begin{proof}[Proof of Theorem \ref{category}]
We retain notation of the proof of Theorem \ref{embed1}. For each $\ell_g\in [0,T]\rightarrow\Gamma$, $g\in\pi_1(\Gamma)$, and $G\in\mathcal L_0(\Gamma)$ consider the first return map 
\[
P\bigl((G\circ\ell_g)'\bigr)=r+\sum_{j=1}^\infty c_{gj}(G)r^{j+1}.
\]
Due to equation \eqref{eq2} and the Rademacher theorem (on a.e. differentiability of Lipschitz maps), the coefficients $c_{gj}$ are real polynomials of degrees at most $j$ on the Banach space $\mathcal L_0(\Gamma)$. Since $\Gamma$ is universal, $P\bigl((I\circ\ell_g)'\bigr)\ne {\rm id}$. Therefore there exists $j_g\in\mathbb N$ such that $c_{gj_g}\not\equiv 0$. In turn, the set of zeros ${\mathcal Z}_{gj_g}$ of $c_{gj_g}$ is a closed nowhere dense subset of $\mathcal L_0(\Gamma)$. We have
\[
U(\Gamma)=\mathcal E\mathcal L_0(\Gamma)\setminus\left(\bigcup_{g\in\pi_1(\Gamma)}\left(\bigcap_{\{j\, :\, c_{gj}\not\equiv 0\}}\mathcal Z_{gj}\right)\right).
\]
Since $\pi_1(\Gamma)$ is countable and $\mathcal E\mathcal L_0(\Gamma)$ is an open subset of $\mathcal L_0(\Gamma)$, the previous identity yields the required statement.
\end{proof}
\begin{proof}[Proof of Theorem \ref{te3}]
The proof consists of two parts. In the first part we approximate $\Gamma$ by a sequence of piecewise linear curves satisfying all the above conditions but (c). Then we ``smooth the corners'' of curves of the constructed sequence to get the required sequence $\{\Gamma_i\}$.\medskip

{\bf I.} We consider $\Gamma$ as a connected finite plane graph whose vertex set contains $S_{\Gamma}$ and the origin. By definition, each vertex not in $S_{\Gamma}$ has degree $2$.
First, let us show how to reduce the approximation problem to certain plane graphs whose vertices have degrees $\le 3$.\smallskip

Making a rotation of $\Gamma$ about $0\in\mathbb R^2$ by a small angle, without loss of generality we may assume that all edges of $\Gamma$ are not parallel to the $x$-axis. Suppose $v\in S_{\Gamma}$ has degree $\ge 3$. Let $E(v)$ be the set of edges of $\Gamma$ emanating from $v$. Let $\ell_{\pm}:=\{(x,y)\in\mathbb R^2\,:\, y=v_{\pm}\, , {\rm sgn}(v_{\pm}-v_{y})=\pm 1 \}$ be two lines parallel to the $x$-axis with $y$ coordinates $v_+, v_-$ larger and smaller than that of $v$ (denoted by $v_{y}$). For sufficiently small $|v_{\pm}-v_y|$ the union of these lines intersect all edges in $E(v)$. Let $I_{\pm}\subset \ell_{\pm}$ be closed intervals with endpoints in $E(v)$ whose union contains all points in $(\ell_{+}\sqcup\ell_{-})\cap E(v)$. Consider a piecewise linear curve $\Gamma_v$ obtained from $\Gamma$ by removing all parts of edges of $E(v)$ between $\ell_{-}$ and $\ell_{+}$ except for one part for each choice of sign $+$ or $-$ and then adding intervals $I_{+}$ and $I_{-}$ instead (cf. Figure 3).
\begin{center}
\includegraphics[scale=0.8]{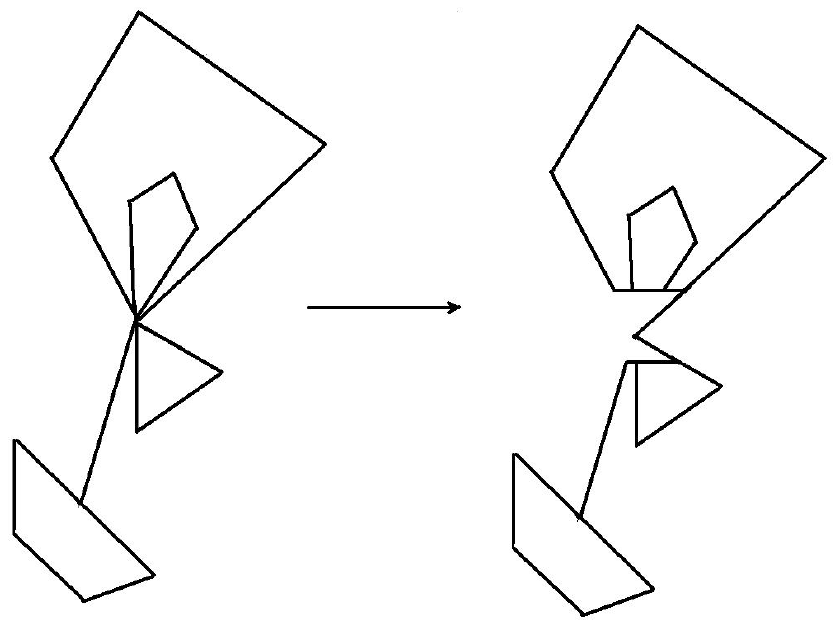} 

Figure 3. Example of curves $\Gamma$ and $\Gamma_v$. 
\end{center}

One can easily check that (for all sufficiently small $|v_{\pm}-v_y|$), $\pi_1(\Gamma_v)\cong\pi_1(\Gamma)$.  (Indeed, in this case according to the Seifert--van Kampen theorem the fundamental group of the union $U$ of $\Gamma$ and of the minimal convex set containing  $I_{+}, I_{-}$ and all parts of edges of $E(v)$ between $\ell_{-}$ and $\ell_{+}$ is isomorphic to $\pi_1(\Gamma)$. Also, $\Gamma_v$ is the deformation retract of $U$, i.e., $\pi_1(\Gamma_v)\cong\pi_1(U)\cong\pi_1(\Gamma)$ as required.) Moreover,  in a neighbourhood of $v$ all vertices of $\Gamma_v$ have degrees $\le 3$ and if a vertex there has degree $3$ then two of three edges adjacent to it lie on a straight line; all other vertices of $\Gamma_v$ are the same as for $\Gamma$, and the family of such curves $\Gamma_v$ converges to $\Gamma$ with respect to $\mathcal H^1$-measure and metric $d_{\mathcal H}$ (cf. (d) of the theorem) as $|v_{\pm}-v_y|$ and the above angle of rotation of $\Gamma$ about the origin tend to $0$. 

Applying subsequently a similar procedure to other vertices of $\Gamma$ with degrees $\ge 3$, after a finite number of steps we obtain a piecewise linear curve of the same homotopy type as $\Gamma$ with vertices of degrees $\le 3$ satisfying the same property as $\Gamma_v$ in a neighbourhood of $v$.  Moreover, there exists a sequence of such curves converging to $\Gamma$ with respect to $\mathcal H^1$-measure and metric $d_{\mathcal H}$.

Thus, from now on without loss of generality we will assume that {\em all vertices of $\Gamma$ have degrees $\le 3$, and if a vertex has degree $3$, then  two of three edges adjacent to it lie on a straight line}.

As a next step, we construct a rectangular piecewise linear curve $\Gamma_r$ by replacing edges of $\Gamma$ by unions of cathetii of right triangles with pairwise nonintersecting interiors of their sides and with hypotenuses lying on edges of $\Gamma$ (see Figure 4 below).  Also, we transfer to $\Gamma_r$ edges of $\Gamma$ parallel to one of the coordinate axes.  To show that the required $\Gamma_r$ exists, it suffices to construct the corresponding right triangles locally in neighbourhoods of vertices of $\Gamma$. This is possible since, due to the above property of $\Gamma$, each closed quadrant with respect to the coordinate axes centered at a vertex of $\Gamma$ contains in a small open neighbourhood of this vertex no more than $2$ edges of $\Gamma$. 
\begin{center}
\includegraphics[scale=1.16]{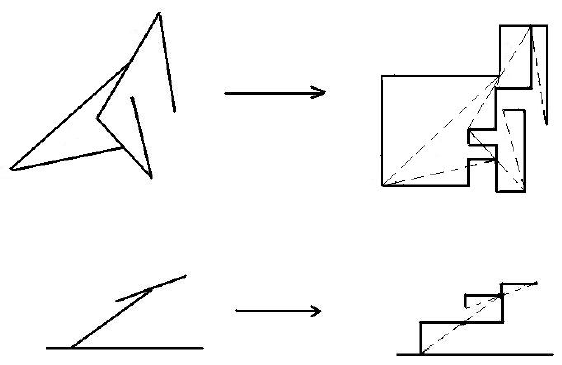}

Figure 4. Examples of curves $\Gamma$ and $\Gamma_r$.\smallskip
\end{center}

Since in a right triangle the union of cathetii can be isotopically deformed to the hypotenuse, the curve $\Gamma_r$ can be isotopically deformed to $\Gamma$ and, in particular, $\pi_1(\Gamma_r)\cong\pi_1(\Gamma)$. We make use of a specific isotopy between $\Gamma_r$ and $\Gamma$ defined in each right triangle $T$ with hypotenuse lying on an edge of $\Gamma$ as follows. Using a suitable affine transformation $L$ we map $T$ to the right triangle with coordinates $(0,0)$, $(a,0)$ and $(0,b)$, $a,b> 0$.
The hypotenuse in this triangle is given by equation $y=\frac ba (a-x)$ for $x\in [0,a]$. We deform the union of cathetii  $K_1:=\{(x,y)\, :\, x\in [0,a],\,  y=0\}$ and $K_2:=\{(x,y)\, :\, x=0,\, y\in [0,b]\}$ along the direction of the line $y=\frac ba x$ so that at time $t\in [0,1]$ cathetus $K_1$ transfers to the line $y=\frac{tb (a-x)}{a(2-t)}$, $x\in [\frac a2 t, a]$, and $K_2$ to the line $x=\frac{ta (b-y)}{b(2-t)}$, $y\in [\frac b2 t,b]$, see Figure 5. Then we apply the inverse affine transformation $L^{-1}$ to place the deformed break line back inside the original triangle $T$.
\begin{center}
\includegraphics[scale=1.8]{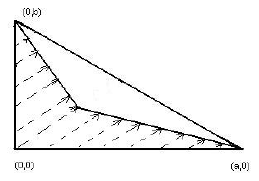}

Figure 5. Isotopy between the union of cathetii and the hypotenuse.\medskip
\end{center}

Without loss of generality we will assume that the vertex set of $\Gamma$ is a subset of the vertex set of $\Gamma_r$. By $\Phi : \Gamma_r\times [0,1]\rightarrow\mathbb R^2$ we denote the above described isotopy between $\Gamma_r$ and $\Gamma$ which deforms cathetii of each right triangle with hypotenuse an edge of $\Gamma$ toward the hypotenuse by the above formulas (modulo an affine transformation). We set $\Gamma_t:=\Phi(t,\Gamma_r)$.

Further, each {\em piecewise linear path} $\gamma: [0,T]\rightarrow \Gamma_r$ (i.e. a path composed of finitely many paths
moving along each edge with constant velocities) can be transformed using $\Phi$ to a piecewise linear path $\gamma_1: [0,T]\rightarrow\Gamma$.  We set $\gamma_t:=\Phi(\gamma,t)$, $t\in (0,1)$. Each $\gamma_t$ is a piecewise linear path in $\Gamma_t$.  Let  $\gamma_{t}'\in \mathcal A$ (see Section 2) be the velocity of path $\gamma_t$.
We require the following result.
\begin{Lm}\label{le4.2}
Each iterated integral $I_{i_{1},\dots, i_{k}}(\gamma_t')$ is a  real polynomial in $t$ of degree $\le k$.
\end{Lm}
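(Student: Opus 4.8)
The plan is to exploit the multilinearity of the iterated integral together with the observation that the isotopy $\Phi$ moves each point of $\Gamma_r$ \emph{affinely} in the parameter $t$. Recall that
\[
I_{i_{1},\dots, i_{k}}(\gamma_t')=\int\cdots\int_{0\le s_1\le\cdots\le s_k\le T}\gamma_{t,i_k}'(s_k)\cdots\gamma_{t,i_1}'(s_1)\,ds_k\cdots ds_1,
\]
where $\gamma_t=(\gamma_{t,1},\gamma_{t,2})$ and the domain of integration does not depend on $t$. Hence it suffices to prove that for a.e.\ fixed $s\in[0,T]$ each component $\gamma_{t,i}'(s)$, $i=1,2$, is a polynomial in $t$ of degree $\le 1$ whose coefficients are bounded measurable functions of $s$. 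Granting this, the integrand $\gamma_{t,i_k}'(s_k)\cdots\gamma_{t,i_1}'(s_1)$ is a product of $k$ factors each of degree $\le 1$ in $t$, so it is a polynomial in $t$ of degree $\le k$ whose coefficients are bounded and measurable on the fixed simplex; integrating the finitely many terms separately over that simplex yields the assertion.

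To establish the degree-$\le 1$ claim I would argue directly from the explicit formulas defining $\Phi$. Fix a right triangle $T$ with hypotenuse on an edge of $\Gamma$ and normalize by the ($t$-independent) affine map $L$ so that its vertices are $(0,0),(a,0),(0,b)$. Tracking a point $(x_0,0)$ of the cathetus $K_1$ along the deformation direction $(a,b)$ until it meets the line $y=\frac{tb(a-x)}{a(2-t)}$, one solves $\lambda b=\frac{tb(a-x_0-\lambda a)}{a(2-t)}$; here the apparent denominator $2-t$ cancels, giving $\lambda=\frac{t(a-x_0)}{2a}$ and the image
\[
\Bigl(x_0+\tfrac{t}{2}(a-x_0),\ \tfrac{t}{2}\cdot\tfrac{b(a-x_0)}{a}\Bigr),
\]
which is affine in $t$ for each fixed $x_0$; the analogous computation for $K_2$ gives the same conclusion. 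Since $L$ and $L^{-1}$ are independent of $t$, placing the deformed segment back into $T$ by $L^{-1}$ preserves the affine-in-$t$ dependence. On the edges of $\Gamma_r$ parallel to the coordinate axes, which are transferred to $\Gamma$ unchanged, $\Phi(\cdot,t)$ is the identity and hence trivially affine in $t$. Therefore $\Phi(p,t)$ is, for each fixed $p\in\Gamma_r$, a polynomial of degree $\le 1$ in $t$.

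Finally, since $\gamma:[0,T]\to\Gamma_r$ is piecewise linear with a parametrization independent of $t$, I would write $\gamma_t(s)=\Phi(\gamma(s),t)=c_0(s)+t\,c_1(s)$, where $c_0,c_1:[0,T]\to\mathbb R^2$ are piecewise linear (in particular Lipschitz) in $s$ by the previous paragraph. Differentiating in $s$ gives $\gamma_t'(s)=c_0'(s)+t\,c_1'(s)$ with $c_0',c_1'$ bounded measurable, so each $\gamma_{t,i}'(s)$ is a polynomial of degree $\le 1$ in $t$ with bounded measurable coefficients, exactly as required in the first paragraph. This completes the argument.

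The step I expect to be the main obstacle is the second one: one must verify that the point-level action of $\Phi$ is genuinely affine (not merely rational) in $t$ — the cancellation of the factor $2-t$ is precisely what makes this work — and then check that this affine dependence survives the normalizations $L,L^{-1}$ and the passage between the various triangles, while keeping the $t^0$- and $t^1$-coefficients bounded and measurable in $s$ so that the term-by-term integration in the first paragraph is legitimate.
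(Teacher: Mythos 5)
Your proof is correct, and it reaches the same computational heart as the paper: the explicit point action of the isotopy $\Phi$, where the factor $2-t$ cancels and leaves affine dependence on $t$. Your formula $\bigl(x_0+\tfrac t2(a-x_0),\,\tfrac{tb(a-x_0)}{2a}\bigr)$ is exactly the paper's expression for $\Phi(\gamma_0(s),t)$. The surrounding reduction, however, is genuinely different. The paper decomposes $\gamma_t$ as a concatenation of linear paths, invokes the $*$-product formulas \eqref{e2.2}, \eqref{e2.3} to reduce to a single linear factor, and then notes that such a factor either is $t$-independent or has velocity constant in $s$ and affine in $t$, so its iterated integrals are explicit polynomials of degree $\le k$; the composition formulas then propagate the degree bound, since lengths (hence degrees) add across factors. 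You bypass the concatenation entirely: you observe that $\gamma_t'(s)=c_0'(s)+t\,c_1'(s)$ for a.e.\ $s$ with $c_0',c_1'$ bounded measurable, expand the integrand as a polynomial of degree $\le k$ in $t$, and integrate term by term over the fixed simplex. Your route is shorter and slightly more general (it gives: velocity polynomial of degree $d$ in $t$ pointwise a.e.\ implies iterated integrals of length $k$ are polynomials of degree $\le dk$), at the cost of the measurability and boundedness bookkeeping, which you correctly supply; the paper's route stays within the $*$-product machinery already set up in Section 2 (and reused verbatim in Lemma \ref{le4.4}), and reduces everything to factors whose velocities are constants in $s$, where no such bookkeeping is needed.
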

\begin{proof}
By definition, $\gamma_t$ is product (in the sense of algebraic topology) of finitely many linear paths with images in edges of $\gamma_t$. In turn, $\gamma_t'$ is $*$-product of velocities of these linear paths. Due to formulas for iterated integrals of $*$-products, see \eqref{e2.2}, \eqref{e2.3}, it  suffices to prove the result for velocities of linear paths appearing as factors of $\gamma_t$ in the  above product. Then by the definition of $\gamma_t$ we must consider the following cases:

(1) $\gamma_t$ does not depend on $t$, this means that the image of $\gamma_1$ belongs to a vertical or a horizontal intervals. In this case the statement of the lemma is obvious.

(2) Image of $\gamma_t$ belongs to one of the intervals inside of a straight triangle with hypotenuse being an edge of $\Gamma$ (cf. Figure 5). Making an affine transformation of this triangle (which clearly does not affect the final result) without loss of generality we may assume that we are in position of Figure 5. Suppose for
definiteness that image of $\gamma_t$ belongs to the line $y=\frac{tb (a-x)}{a(2-t)}$, $x\in [\frac a2 t, a]$ (the lower line inside the triangle in Figure 5) and the endpoints of $\gamma_t$ have $x$-coordinates $\frac a2 t\le s_0<s_1\le a$. By the  definition of isotopy $\Phi$ in this case we have
\[
\gamma_t(s):=\Phi(\gamma_0(s),t)=\left( \frac{at}{2}+\frac{(2-t)\gamma_0(s)}{2}, \frac{bt}{2}-\frac{bt\gamma_0(s)}{2a} \right),\qquad s\in [0,T].
\]
Here $\gamma_0:[0,T]\rightarrow K_1\subset\mathbb R$ is a linear path with image in the cathetus $K_1$ and endpoints $s_0,s_1\in K_1$. Assuming that
$\gamma_0'=v_0\, (\in\mathbb R)$, we obtain
\[
\gamma_t'(s):=\left(\frac{(2-t)v_0}{2}, -\frac{bt v_0}{2a} \right),\qquad s\in [0,T].
\]
Since $\gamma_t'$ depends linearly on $t$, we have (see \eqref{eq2}) that  each iterated integral $I_{i_{1},\dots, i_{k}}(\gamma_t')$ is a  real polynomial in $t$ of degree at most $k$ as required.

All other choices of $\gamma_t$ are treated similarly.
\end{proof}

Using Lemma \ref{le4.2} we prove
\begin{Proposition}\label{prop4.3}
There exists an at most countable subset $Z\subset [0,1]$ such that for each $t\in [0,1]\setminus Z$ the piecewise linear curve $\Gamma_t$ is universal.
\end{Proposition}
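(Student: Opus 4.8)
The plan is to reproduce the argument of the proof of Theorem~\ref{embed1}, with the one-parameter family $F+\lambda I$ replaced by the isotopy $\Phi$ and with the role of the known universal curve played by the endpoint $\Gamma_0=\Gamma_r$. By construction $\Gamma_r$ is rectangular, hence universal by \cite[Th.~2.1]{B3}; this is the anchor at which I will certify that the relevant power-series coefficients do not vanish.

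First I would fix, for each nontrivial $g\in\pi_1(\Gamma_r)$, a closed piecewise linear path $\gamma_g:[0,T]\rightarrow\Gamma_r$ based at $0$ and representing $g$ (such an edge path exists because $\Gamma_r$ is a finite graph). The isotopy $\Phi$ fixes every vertex of $\Gamma$, and in particular the origin: the endpoints of each hypotenuse stay put while the right-angle corners slide along the lines $y=\frac ba x$, and edges parallel to a coordinate axis are not moved at all. Consequently $\gamma_{g,t}:=\Phi(\gamma_g,t)$ is, for every $t$, a closed piecewise linear path in $\Gamma_t$ based at $0$, and under the isomorphism $\pi_1(\Gamma_t)\cong\pi_1(\Gamma_r)$ induced by $\Phi$ its class $[\gamma_{g,t}]$ is the image of $g$; as $g$ runs over $\pi_1(\Gamma_r)\setminus\{1\}$ the classes $[\gamma_{g,t}]$ run over $\pi_1(\Gamma_t)\setminus\{1\}$.

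Next, by Lemma~\ref{le4.2} together with \eqref{eq2}, the first return map of the Abel equation with coefficients $\gamma_{g,t}'\in\mathcal A_0(\Gamma_t)$ has the form
\[
P(\gamma_{g,t}')(r)=r+\sum_{j=1}^\infty c_{gj}(t)\,r^{j+1},
\]
where each $c_{gj}$ is a real polynomial in $t$ of degree at most $j$. Since $\Gamma_0=\Gamma_r$ is universal, Theorem~\ref{te1} gives $P(\gamma_{g,0}')=\hat P_{\Gamma_r}([\gamma_{g,0}])\ne{\rm id}$, so there is an index $j_g$ with $c_{gj_g}(0)\ne 0$; in particular $c_{gj_g}\not\equiv 0$ and its set of zeros $Z_g\subset[0,1]$ is finite. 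Putting $Z:=\bigcup_{g\ne 1}Z_g$, which is at most countable because $\pi_1(\Gamma_r)\cong F_m$ is countable, I obtain that for every $t\in[0,1]\setminus Z$ and every nontrivial $g$ the series $P(\gamma_{g,t}')=\hat P_{\Gamma_t}([\gamma_{g,t}])$ is not the identity. Hence ${\rm Ker}\,\hat P_{\Gamma_t}$ is trivial, and by Theorem~\ref{te1} the curve $\Gamma_t$ is universal.

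The steps that require genuine care---and which I expect to be the main obstacle---are geometric rather than algebraic: to invoke Theorem~\ref{te1} one must know that each $\Gamma_t$ is a genuinely embedded (hence Lipschitz triangulable) curve with $\pi_1(\Gamma_t)\cong\pi_1(\Gamma_r)$, so that $\hat P_{\Gamma_t}$ is defined and the computation of homotopy classes above is legitimate. This is exactly where the arrangement that the right triangles have pairwise nonintersecting interiors of their sides enters: it ensures that the simultaneous deformations of distinct edges never collide, so that $\Phi(\cdot,t)$ is an embedding for every $t$ and $\{\Gamma_t\}$ is a true isotopy from $\Gamma_r$ to $\Gamma$. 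Granting this, the remainder is a verbatim analogue of the proof of Theorem~\ref{embed1}.
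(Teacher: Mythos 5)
Your proposal is correct and follows essentially the same route as the paper: anchor at the rectangular (hence universal) curve $\Gamma_r=\Gamma_0$, use Lemma~\ref{le4.2} with \eqref{eq2} to see that the coefficients $c_{gj}(t)$ are polynomials of degree at most $j$, extract a nonvanishing coefficient at $t=0$ via Theorem~\ref{te1}, and remove the countable union of zero sets over $g\in\pi_1(\Gamma_r)$. The paper's proof is itself a near-verbatim repetition of the proof of Theorem~\ref{embed1}, exactly as you anticipated; your added remarks on $\Phi$ fixing the origin and on $\Phi(\cdot,t)$ being an embedding only make explicit what the paper leaves implicit in its construction of the isotopy.
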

\begin{proof}
Since $\Gamma_r:=\Gamma_0$ is a rectangular curve, it is universal, see \cite{B3}. Let  $\ell_1,\dots,\ell_m$ be closed rectangular paths in $\Gamma_r$ containing the origin representing generators of $\pi_1(\Gamma_r)\cong F_m$ (without loss of generality we assume that $m\ge 1$). Each element $g\in \pi_1(\Gamma_r)$ is represented by a closed rectangular path $\ell_g$ which is the finite product of paths $\ell_i$ or their inverses. Further, the isotopy $\Phi$ induces isomorphisms $\pi_1(\Gamma_t)\cong\pi_1(\Gamma_r)$. In particular, closed paths $(\ell_g)_t:[0,T]\rightarrow\Gamma_t$, $g\in\pi_1(\Gamma_r)$, are mutually distinct and represent all elements of $\pi_1(\Gamma_t)$.
According to Lemma \ref{le4.2} and formula \eqref{eq2} (containing expressions for coefficients of series expansions of the first return maps of Abel equations), 
\[
P((\ell_{g})_t')(r)=r+\sum_{j=1}^\infty c_{gj}(t)r^{j+1},
\]
where $c_{gj}$ is a real polynomial in $t$ of degree at most $j$\,  ($1\le j<\infty$).

Since $\Gamma_r$ is universal, the first return map $P(\ell_g')\subset G_c[[r]]$ is not the identity map (see Theorem \ref{te1}).  Thus there exists $j_g\in\mathbb N$ such that $c_{gj_g}(0)\ne 0$.
This implies that $c_{gj_g}$ is a not identically zero polynomial in $t$ of degree at most $j_g$. By $Z_{j_g}$ we denote the set of its zeros in $[0,1]$. Clearly, ${\rm card}\, Z_{j_g}\le j_g$. We define
\[
Z:=\bigcup_{g\in \pi_1(\Gamma_r)}\, Z_{j_g}.
\]
Since $\pi_1(\Gamma_r)$ is countable, the set $Z\subset [0,1]$ is at most countable. By definition, for each $t\in [0,1]\setminus Z$  and $g\in \pi_1(\Gamma_r)$ we have $P((\ell_{g})_t')\ne {\rm id}$.
This shows that the kernel of the corresponding homomorphism $\hat{P}_{\Gamma_t}:\pi_1(\Gamma_t)\rightarrow G_c[[r]]$ (see \eqref{compos}) is trivial. Therefore by Theorem \ref{te1},
$\Gamma_t$ is a universal curve.

The proof of the proposition is complete.
\end{proof}

Now, we choose a sequence $\{t_i\}_{i\in\mathbb N}\subset [0,1]\setminus Z$ converging to $1$. Then according to our construction and the previous proposition piecewise linear curves $\Gamma_{t_i}$ are universal, isotopic to $\Gamma$ and the sequence of such curves converges to $\Gamma$  as $t_i\rightarrow 1$ with respect to $\mathcal H^1$-measure and metric $d_{\mathcal H}$ (see (d) of Theorem \ref{te3}).  This finishes part {\bf I} of the proof.\smallskip

{\bf II.} In this part given a piecewise linear universal curve $\Gamma$ we approximate it (with respect to $\mathcal H^1$-measure and metric $d_{\mathcal H}$) by a sequence of isotopic universal curves having the same sets of points of order $\ne 2$ and $C^\infty$ outside of these sets.

As in part {\bf I}, we isotopically deform $\Gamma$ to smooth its corners. Specifically, 
let $\mathcal V_2$ be the set of vertices of $\Gamma$ of degree $2$ whose adjacent edges do not lie on a straight line. Fix a family of mutually disjoint open connected neighbourhoods $U_v$ of $v\in \mathcal V_2$ such that each $U_v$ does not contain other vertices but $v$. 
We deform simultaneously parts of $\Gamma$ in all $U_v$, $v\in\mathcal  V_2$,  preserving points of $\Gamma$ outside of these neighbourhoods. To this end we map each $U_v$  by a suitable affine transformation $L_v$ onto the angle $R_0:=\{(x,y)\in\mathbb R^2\, :\, y=|x|,\ x\in [-2,2]\}$ (see Figure 6). We construct an isotopy for this angle which preserves parts of the edges situated outside the open disk of radius $\sqrt 2$ centered at the origin and then  applying the inverse affine transformation $L_v^{-1}$ attach the deformed piece of the curve to $\Gamma$.

\begin{center}
\includegraphics[scale=1.4]{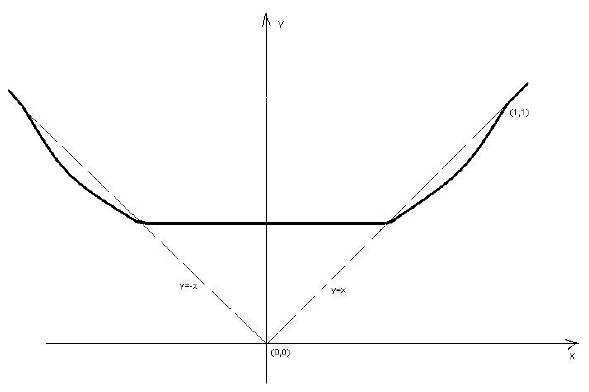}

Figure 6. Smoothing the corner of the angle formed by  the graph $y=|x|$.\smallskip
\end{center}
For this purpose consider an auxiliary $C^\infty$ function
\[
\rho(x):=\left\{
\begin{array}{ccc}
e^{-\frac 1x}&{\rm if}&x> 0\smallskip\\
0&{\rm if}&x\le 0.
\end{array}
\right.
\]
Then the isotopy $\phi : R_0\times [0,1]\rightarrow \mathbb R^2$ is given by the formula $\phi((x, y),t):=\left(x,th(\frac xt)\right)$\, ($y:=|x|$, $x\in \mathbb R$), where
\begin{equation}\label{cutoff}
h(x):=\frac 12+\frac{(|x|-\frac 12)\rho(2|x|-1)}{\rho(2|x|-1)+\rho(2-2|x|)}.
\end{equation}
(By the definition, $h$ is a $C^\infty$ function on $\mathbb R$ equals $\frac 12$ if $|x|\le\frac 12$ and $|x|$ if $|x|\ge 1$.)

By $\Phi: \Gamma\times [0,1]\rightarrow\mathbb R^2$ we denote the (above described) isotopy given by the formula $\Phi(z,t):=L_{v}^{-1}\left(\phi( L_v(z),t)\right)$, $(z,t)\in U_v\times [0,1]$, for all $v\in\mathcal V_2$.
(Without loss of generality we assume that $\Phi$ is correctly defined, i.e.,  that $U_v$ were chosen so small that for each $t\in [0,1]$ images $\Phi(U_v,t)$ for distinct $v$ are pairwise disjoint.) We set $\Gamma_t:=\Phi(\Gamma, t)$ and follow the lines of the proof given in step {\bf I}. Specifically,  for a closed piecewise linear path $\gamma: [0,T]\rightarrow \Gamma$ representing a nontrivial element of $\pi_1(\Gamma)$
we define $\gamma_t:=\Phi(\gamma,t): [0,T]\rightarrow\Gamma_t$. Then,  for this choice of $\gamma_t$, we require an analog of Lemma \ref{le4.2}:
\begin{Lm}\label{le4.4}
Each iterated integral $I_{i_{1},\dots, i_{k}}(\gamma_t')$ is a  real polynomial in $t$ of degree $\le k$.
\end{Lm}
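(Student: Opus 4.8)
The plan is to mirror the proof of Lemma \ref{le4.2}: using the $*$-product formulas \eqref{e2.2} and \eqref{e2.3}, I reduce the claim to the individual factors of the piecewise linear path $\gamma$. Indeed, \eqref{e2.2} expresses a length-$k$ iterated integral of a $*$-product as a sum of products of iterated integrals of the factors whose lengths add up to $k$, so it suffices to show that for each factor the length-$j$ iterated integrals are polynomials in $t$ of degree $\le j$. The essential difference from Lemma \ref{le4.2} is that here $\gamma_t'$ is no longer linear (nor even polynomial) in $t$: along a corner the deformed velocity involves $h'(x/t)$, so the degree bound cannot be read off directly and must instead be extracted from the special form \eqref{cutoff} of $h$.

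I would first dispose of the two easy families of factors. A factor whose image lies outside every neighbourhood $U_v$, $v\in\mathcal V_2$, is fixed by $\Phi$, so its velocity and all its iterated integrals are independent of $t$ (degree $0$). After the fixed, $t$-independent affine change $L_v$ bringing $U_v$ to the model angle $R_0$, a factor whose image lies in the region $|x|\ge 1$ is also unaffected, because $h(s)=|s|$ for $|s|\ge 1$ forces $h'(x/t)=\mathrm{sgn}(x)$ there for all $t\in[0,1]$; such a factor is a genuine straight segment and again contributes $t$-independent integrals. Since $L_v$ replaces each length-$j$ iterated integral by a fixed linear combination of length-$j$ iterated integrals (the forms $dx,dy$ being sent to constant-coefficient combinations of themselves, and translations leaving them unchanged), it preserves the degree bound and may be ignored.

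The heart of the matter is a factor lying in the smoothing region, which after $L_v$ is a piece of the graph $y=t\,h(x/t)$ running between the corner $x=0$ and a boundary point $x=\pm 1$. Changing the variable from the path parameter to $x$ (equivalently, substituting $u_j=x(s_j)$ in \eqref{eq2}, which introduces only $t$-independent constants), the relevant $1$-forms become $dx$ and $h'(x/t)\,dx$, so the length-$k$ integral over, say, $[0,1]$ is $\int_{0\le u_1\le\cdots\le u_k\le 1} g_{i_k}(u_k)\cdots g_{i_1}(u_1)\,du_k\cdots du_1$ with $g_1\equiv 1$ and $g_2(u)=h'(u/t)$. I then split the simplex at $|u|=t$: for $t\in[0,1]$ one has $g_2(u)=\mathrm{sgn}(u)$ whenever $|u|\ge t$, so grouping the $k-m$ variables exceeding $t$ together with the $m$ variables below $t$ factors the integral, for each partition point $m$, into an outer integral with constant integrand (equal to $(1-t)^{k-m}/(k-m)!$ up to sign) times an inner integral over $[0,t]$. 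On the inner block the substitution $u=tw$ rescales $g_2(tw)=h'(w)$ and each $du=t\,dw$, producing exactly a factor $t^m$ times a $t$-independent constant. Each summand is therefore a polynomial in $t$ of degree $m+(k-m)=k$, and summing over $m$ gives a polynomial of degree $\le k$ (the factor from $-1$ to $0$ is identical, with $\mathrm{sgn}(u)=-1$ on the outer block).

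The main obstacle is precisely this last computation: one must convert the transcendental $t$-dependence carried by $h'(x/t)$ into a polynomial one. This works because $h$ is affine off $[-1,1]$, so the outer part of each integral sees only the constant slope $\pm 1$ and contributes a polynomial in the interval length $1-t$, while the inner part is homogeneous of the correct degree under the dilation $u\mapsto tw$. Choosing the smoothed factors to terminate exactly at the boundary points $x=\pm 1$, rather than at arbitrary interior points, is what makes the identity $g_2=\mathrm{sgn}$ valid on the whole outer block for every $t\in[0,1]$, and hence what yields the uniform degree bound.
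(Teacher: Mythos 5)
Your proof is correct and follows essentially the same route as the paper's: after the same reductions via \eqref{e2.2}, \eqref{e2.3} and the affine maps $L_v$, you split the smoothed edge at $x=t$, extracting $t^{m}$ times a $t$-independent constant from the inner piece by the dilation $u=tw$ and $(1-t)^{k-m}/(k-m)!$ from the straight outer piece where $h'(u/t)=\mathrm{sgn}(u)$. The only cosmetic difference is that the paper realizes this split as a $*$-product of two reparametrized subpaths $\gamma_{1t}$, $\gamma_{2t}$ and invokes \eqref{e2.2} once more, whereas you decompose the integration simplex directly according to how many variables lie below $t$ --- which is the same computation.
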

\begin{proof}
Adding additional vertices, if necessary, without loss of generality we may assume that each edge of $\Gamma$ contains at most one vertex from $\mathcal V_2$.

Since iterated integrals are constant on the set of derivatives of closed paths representing the same element of $\pi_1(\Gamma)$, one can choose $\gamma$ so that it is a finite product of linear paths whose images are edges of $\Gamma$. Hence, due to formulas \eqref{e2.2} and \eqref{e2.3}, it suffices to prove the result for a linear path $\gamma$ whose image is an edge $e$ of $\Gamma$. If $e$ joins two vertices not in $\mathcal V_2$, then $\gamma_t=\gamma$ for all $t$ and the required result is obvious. For otherwise, making a suitable transformation $L_v$, without loss of generality we may assume that $\gamma$ is a linear map onto a subinterval of the line $y=x$ with endpoints $(0,0)$ and $(a,a)$ for some $a>2$. Applying anew formulas 
\eqref{e2.2} and \eqref{e2.3} to a proper factorization of $\gamma$, we see that it suffices to prove the lemma for $\gamma$ whose image is interval $[(0,0), (1,1)]$. 
In this case making a change of parameter we may assume that $\gamma(s):=(s,s)$, $s\in [0,1]$. Hence, $\gamma_t(s)=\left(s, th(\frac st)\right)$, $s\in [0,1]$. We present $\gamma_t$ as product of paths $\gamma_{1t}(s):=\gamma_t(ts)$, $s\in [0,1]$, and $\gamma_{2t}(s):=\gamma_t(t+(1-t)s)$, $s\in [0,1]$. Therefore due to formulas \eqref{e2.2}, \eqref{e2.3} it suffices to prove the result for iterated integrals of derivatives of paths $\gamma_{it}$, $i=1,2$. We have
\[
\gamma_{1t}'(s):=(t,th'(s)),\qquad s\in [0,1].
\]
Thus, see \eqref{eq2}, 
\[
I_{i_{1},\dots, i_{k}}(\gamma_{1t}')=t^k\cdot I_{i_{1},\dots, i_{k}}(\gamma_{11}')\, (=const\cdot t^k)
\]
is a polynomial in $t$ of degree at most $k$.

In turn,
\[
\gamma_{2t}'(s):=\left(1-t,(1-t)\cdot h'\left(1+\frac{1-t}{t}s\right)\right)=(1-t,1-t),\qquad s\in [0,1],
\]
since $h(x)=x$ for $x\ge 1$.
Hence,
\[
I_{i_{1},\dots, i_{k}}(\gamma_{2t}')=\frac{(1-t)^k}{k!}
\]
is a polynomial in $t$ of degree at most $k$ as well.

This completes the proof of the lemma.
\end{proof}

Repeating word-for-word the arguments of the proof of Proposition \ref{prop4.3} (now based on Lemma \ref{le4.4}) we get in our case
\begin{Proposition}\label{prop4.5}
There exists an at most countable subset $Z\subset [0,1]$ such that for each $t\in [0,1]\setminus Z$ the curve $\Gamma_t$ (which is $C^\infty$ outside the set $S_\Gamma$) is universal.
\end{Proposition}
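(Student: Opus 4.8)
The plan is to mimic verbatim the proof of Proposition \ref{prop4.3}, with the rectangular curve there replaced by the piecewise linear universal curve $\Gamma=\Gamma_0$ of the present setting and the input Lemma \ref{le4.2} replaced by its analog Lemma \ref{le4.4}. The governing principle is Theorem \ref{te1}: a Lipschitz triangulable curve is universal exactly when the homomorphism $\hat P$ of its fundamental group into $G_c[[r]]$ is injective. So the task reduces to showing that, outside an at most countable set of parameters $t$, the homomorphism $\hat P_{\Gamma_t}$ has trivial kernel.

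First I would fix closed piecewise linear paths $\ell_1,\dots,\ell_m$ through the origin representing a free basis of $\pi_1(\Gamma)\cong F_m$ (we may assume $m\ge 1$, the case $m=0$ being trivial), and for each $g\in\pi_1(\Gamma)$ pick a closed piecewise linear representative $\ell_g$ obtained as a finite product of the $\ell_i$ and their inverses. Since $\Phi$ is an isotopy, it induces isomorphisms $\pi_1(\Gamma_t)\cong\pi_1(\Gamma)$, so the deformed paths $(\ell_g)_t:=\Phi(\ell_g,t)$ are mutually distinct and represent all elements of $\pi_1(\Gamma_t)$. By Lemma \ref{le4.4} together with \eqref{eq2}, every coefficient in the expansion
\[
P\bigl((\ell_g)_t'\bigr)(r)=r+\sum_{j=1}^\infty c_{gj}(t)\,r^{j+1}
\]
is a real polynomial $c_{gj}$ in $t$ of degree at most $j$.

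Next I would invoke universality of $\Gamma=\Gamma_0$: by Theorem \ref{te1}, for each nontrivial $g$ the map $P(\ell_g')$ is not the identity, so some coefficient satisfies $c_{gj_g}(0)\ne 0$; hence $c_{gj_g}$ is not identically zero and has at most $j_g$ zeros in $[0,1]$, which I collect into a finite set $Z_{j_g}$. Setting $Z:=\bigcup_{g\in\pi_1(\Gamma)}Z_{j_g}$ gives an at most countable set, since $\pi_1(\Gamma)$ is countable. For $t\in[0,1]\setminus Z$ and every nontrivial $g$ one has $P((\ell_g)_t')\ne{\rm id}$, so the kernel of $\hat P_{\Gamma_t}$ is trivial and $\Gamma_t$ is universal by Theorem \ref{te1}; the $C^\infty$ regularity outside $S_\Gamma$ is built into the smoothing construction through the cutoff $h$ of \eqref{cutoff}.

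The step carrying the real weight is not this bookkeeping but the polynomiality statement already packaged into Lemma \ref{le4.4}, namely that the corner-smoothing isotopy keeps all iterated integrals polynomial in $t$. Granting that lemma, what remains is the same algebra-of-zero-sets routine as in Proposition \ref{prop4.3}, and the only points deserving a word of care are that $\Phi$ genuinely induces fundamental group isomorphisms (so the $(\ell_g)_t$ exhaust $\pi_1(\Gamma_t)$) and that a countable union of finite zero sets remains at most countable.
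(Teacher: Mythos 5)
Your proposal is correct and is essentially identical to the paper's own proof: the paper disposes of Proposition \ref{prop4.5} by stating that one repeats the argument of Proposition \ref{prop4.3} word-for-word with Lemma \ref{le4.2} replaced by Lemma \ref{le4.4}, which is precisely the routine you carry out (polynomiality of the coefficients $c_{gj}(t)$, nonvanishing at $t=0$ from universality of $\Gamma_0$ via Theorem \ref{te1}, and a countable union of finite zero sets). Your explicit restriction to nontrivial $g$ is a harmless clarification of a point the paper leaves implicit.
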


Choosing a sequence $\{t_i\}_{i\in\mathbb N}\subset [0,1]\setminus Z$ converging to $0$ we obtain that curves $\Gamma_{t_i}$, having the same sets $S_{\Gamma_i}$ and $C^\infty$ outside of these sets, are universal, isotopic to $\Gamma$ and the sequence of such curves converges to $\Gamma$  as $t_i\rightarrow 0$ with respect to $\mathcal H^1$-measure and metric $d_{\mathcal H}$.  

This finishes the proof of part {\bf II} and hence of the theorem.
\end{proof}

\begin{R}\label{concluding}
{\rm According to our construction, the sequence of universal curves $\{\Gamma_i\}_{i\in\mathbb N}$ converging to $\Gamma$ has the property that for each $x\in S_{\Gamma_i}$ of order $3$ there is an open disk $D_x$ centered at $x$ such that $\Gamma_i\cap (D_x\setminus\{x\})$ is the union of three open intervals. Using the method of part {\bf II} of the proof of Theorem \ref{te3} one can additionally smooth curves $\Gamma_i$ to obtain new curves $\widetilde \Gamma_i$ satisfying conditions (a)--(d) of the theorem and such that}
for each $x\in S_{\widetilde \Gamma_i}$, ${\rm ord}(x)=3$, in an open disk $D_x$ centered at $x$,  $\widetilde \Gamma_i$ is the union of a connected $C^\infty$ curve and its tangent at $x$ having infinite order contact at $x$.
{\rm Moreover, one can easily show that such a curve $\widetilde\Gamma_i$ can be obtained as the image of a $C^\infty$ map $\tilde A: [0,T]\rightarrow\mathbb R^2$ which corresponds to an Abel equation \eqref{eq1} with $C^\infty$ coefficients. We leave the details to the reader.}
\end{R}

\end{document}